\documentclass[12pt,a4paper]{amsart}

\usepackage{amssymb}

\addtolength{\textheight}{2cm}
\addtolength{\textwidth}{2cm}
\addtolength{\voffset}{-1cm}
\addtolength{\hoffset}{-1cm}

\theoremstyle{plain}

\newtheorem{theorem}{Theorem}[section]
\newtheorem{proposition}[theorem]{Proposition}
\newtheorem{lemma}[theorem]{Lemma}
\newtheorem{corollary}[theorem]{Corollary}

\theoremstyle{remark}
\newtheorem{remark}[theorem]{Remark}

\theoremstyle{definition}
\newtheorem*{problem*}{Problem}
\newtheorem*{acknowledgements*}{Acknowledgements}

\title[Abstract quotients of profinite groups]{Abstract quotients of
  profinite groups, after Nikolov and Segal}

\author{Benjamin Klopsch} \address{Mathematisches Institut der
  Heinrich-Heine-Universit\"at, Universit\"atsstr.\ 1, 40225
  D\"usseldorf, Germany}\email{klopsch@math.uni-duesseldorf.de}

\subjclass[2010]{Primary 20E18; Secondary 20F12, 22C05}

\begin{document}

\maketitle

\begin{abstract}
  In this expanded account of a talk given at the Oberwolfach
  Arbeitsgemeinschaft ``Totally Disconnected Groups'', October 2014,
  we discuss results of Nikolay Nikolov and Dan Segal on abstract
  quotients of compact Hausdorff topological groups, paying special
  attention to the class of finitely generated profinite groups.  Our
  primary source is~\cite{NN_DS12}.  Sidestepping all difficult and
  technical proofs, we present a selection of accessible arguments to
  illuminate key ideas in the subject.
\end{abstract} 

\section{Introduction}
\noindent
\textbf{\S {\thesection}.1.} Many concepts and techniques in the
theory of finite groups depend intrinsically on the assumption that
the groups considered are a priori finite.  The theoretical framework
based on such methods has led to marvellous achievements, including --
as a particular highlight -- the classification of all finite simple
groups.  Notwithstanding, the same methods are only of limited use in
the study of infinite groups: it remains mysterious how one could
possibly pin down the structure of a general infinite group in a
systematic way.  Significantly more can be said if such a group comes
equipped with additional information, such as a structure-preserving
action on a notable geometric object.  A coherent approach to studying
restricted classes of infinite groups is found by imposing suitable
`finiteness conditions', i.e., conditions that generalise the notion
of being finite but are significantly more flexible, such as the group
being finitely generated or compact with respect to a natural
topology.

One rather fruitful theme, fusing methods from finite and infinite
group theory, consists in studying the interplay between an infinite
group $\Gamma$ and the collection of all its finite quotients.  In
passing, we note that the latter, subject to surjections, naturally
form a lattice that is anti-isomorphic to the lattice of finite-index
normal subgroups of~$\Gamma$, subject to inclusions.  In this context
it is reasonable to focus on $\Gamma$ being \emph{residually finite},
i.e., the intersection $\bigcap_{N \trianglelefteq_\mathrm{f} \Gamma}
N$ of all finite-index normal subgroups of~$\Gamma$ being trivial.
Every residually finite group $\Gamma$ embeds as a dense subgroup into
its profinite completion $\widehat{\Gamma} = \varprojlim_{N
  \trianglelefteq_\mathrm{f} \Gamma} \Gamma/N$.  The latter can be
constructed as a closed subgroup of the direct product $\prod_{N
  \trianglelefteq_\mathrm{f} \Gamma} \Gamma/N$ of finite discrete
groups and thus inherits the structure of a \emph{profinite group},
i.e., a topological group that is compact, Hausdorff and totally
disconnected.  The finite quotients of $\Gamma$ are in one-to-one
correspondence with the continuous finite quotients of
$\widehat{\Gamma}$.

Profinite groups also arise naturally in a range of other contexts,
e.g., as Galois groups of infinite field extensions or as open compact
subgroups of Lie groups over non-archimedean local fields, and it is
beneficial to develop the theory of profinite groups in some
generality.  At an advanced stage, one is naturally led to examine
more closely the relationship between the underlying abstract group
structure of a profinite group~$G$ and its topology.  In the following
we employ the adjective `abstract' to emphasise that a subgroup,
respectively a quotient, of $G$ is not required to be closed,
respectively continuous.  For instance, as $G$ is compact, an abstract
subgroup $H$ is open in~$G$ if and only if it is closed and of finite
index in~$G$.  A profinite group may or may not have (normal) abstract
subgroups of finite index that fail to be closed.  Put in another way,
$G$ may or may not have non-continuous finite quotients.  What can be
said about abstract quotients of a profinite group $G$ in general?
When do they exist and how `unexpected' can their features possibly
be?

To the newcomer, even rather basic groups offer some initial
surprises.  For instance, consider for a prime $p$ the procyclic group
$\mathbb{Z}_p = \varprojlim_{k \in \mathbb{N}} \mathbb{Z} / p^k
\mathbb{Z}$ of $p$-adic integers under addition.  Of course, its
proper continuous quotients are just the finite cyclic groups
$\mathbb{Z}_p / p^k \mathbb{Z}_p$.  As we will see below, the abelian
group $\mathbb{Z}_p$ fails to map abstractly onto $\mathbb{Z}$, but
does have abstract quotients isomorphic to $\mathbb{Q}$.

\bigskip

\noindent
\textbf{\S {\thesection}.2.}  In~\cite{NN_DS12}, Nikolov and Segal
streamline and generalise their earlier results that led, in
particular, to the solution of the following problem raised by
Jean-Pierre Serre in the 1970s: are finite-index abstract subgroups of
an arbitrary finitely generated profinite group $G$ always open
in~$G$?  Recall that a profinite group is said to be (topologically)
\emph{finitely generated} if it contains a dense finitely generated
abstract subgroup.  The problem, which can be found in later editions
of Serre's book on Galois cohomology~\cite[I.\S4.2]{JPS97}, was solved
by Nikolov and Segal in~2003.

\begin{theorem}[Nikolov, Segal~\cite{NN_DS07a}] \label{thm:Serre} Let
  $G$ be a finitely generated profinite group.  Then every
  finite-index abstract subgroup $H$ of $G$ is open in~$G$.
\end{theorem}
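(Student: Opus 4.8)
The idea is to reduce the assertion to a statement about power subgroups and then to invoke the substantial theorem of Nikolov and Segal on bounded widths in finite groups.

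First I would reduce to the case of a normal subgroup. Since $H$ has finite index in $G$, it contains a subgroup $N$ that is normal in $G$ and of finite index, say $m = [G:N]$ --- for instance the normal core $\bigcap_{g \in G} gHg^{-1}$ of $H$, equivalently the kernel of the action of $G$ by permutations on the (finitely many) cosets of $H$. As a finite group of order $m$, the quotient $G/N$ has exponent dividing $m$, so $g^{m} \in N$ for every $g \in G$. Hence the \emph{abstract} subgroup $G^{m} = \langle g^{m} : g \in G \rangle$ generated by all $m$-th powers satisfies $G^{m} \le N \le H$. Since a subgroup of a topological group that contains an open subgroup is itself open, it is now enough to prove that $G^{m}$ is open in~$G$.

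To prove that $G^{m}$ is open I would combine two inputs. First, its closure $\overline{G^{m}}$ has finite index in $G$: the continuous quotient $G/\overline{G^{m}}$ is a topologically finitely generated profinite group in which every element has order dividing $m$, and by the positive solution of the restricted Burnside problem any such group is finite; hence $\overline{G^{m}}$ is open. It then remains to show that the abstract subgroup $G^{m}$ is already closed, i.e.\ that $G^{m} = \overline{G^{m}}$. Writing $d$ for the number of topological generators of $G$, this would follow from the \emph{uniform width bound}: there is a constant $f = f(d,m)$ such that in every finite group generated by $d$ elements, each element of the subgroup generated by the $m$-th powers is a product of at most $f$ $m$-th powers. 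Granting this, the image of the continuous map $G^{f} \to G$, $(x_{1},\dots,x_{f}) \mapsto x_{1}^{m} \cdots x_{f}^{m}$, is compact, is contained in $G^{m}$, and --- applying the bound in each finite continuous quotient of $G$ --- is dense in $\overline{G^{m}}$; a compact dense subset of $\overline{G^{m}}$ equals $\overline{G^{m}}$, so $G^{m} = \overline{G^{m}}$ is open, as required.

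The crux, and the step I expect to be by far the hardest, is the uniform width bound invoked above: this is exactly the deep part of the Nikolov--Segal theorem (see~\cite{NN_DS07a}, and the streamlined account in~\cite{NN_DS12}). Its proof reduces, via the generalised Fitting subgroup, to finite quasisimple and almost simple groups, where it relies on the classification of finite simple groups together with a battery of bounded-generation estimates. We shall not reproduce that argument here; instead, the following sections illustrate the surrounding circle of ideas on more tractable examples.
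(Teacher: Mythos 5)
Your reduction to showing that the power subgroup $G^{m}$ is open is correct and clean: with $N$ the normal core of $H$ and $m = [G:N]$, indeed $G^{m} \subseteq N \subseteq H$, and a subgroup containing an open subgroup is open. However, two things are worth flagging.

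First, a citation issue that reflects a genuine mathematical point: the \emph{power-word} width bound you invoke --- that in every $d$-generator finite group each element of $\langle g^{m}\rangle$ is a product of at most $f(d,m)$ many $m$-th powers --- is \emph{not} in~\cite{NN_DS07a}. The word $X^{m}$ is not $d$-locally finite for general $m$ (by the negative solution of the unrestricted Burnside problem), so it falls outside the scope of the $d$-locally-finite-word theorem proved there; the original deduction of Serre's problem in~\cite{NN_DS07a} used a different, specially engineered $d$-locally finite word. The result you actually need is Theorem~\ref{thm:q-th-power} from~\cite{NN_DS11}, which is obtained from the NS07 machinery only together with a further appeal to Zelmanov's solution of the restricted Burnside problem --- the same appeal you make separately to see that $G/\overline{G^{m}}$ is finite. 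So your argument does go through, but it rests on a logically stronger black box than NS07 alone, and this paper's remark after Theorem~\ref{thm:q-th-power} explicitly singles out your route as what would constitute ``a new proof'' of Theorem~\ref{thm:Serre}.

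Second, this is genuinely different from the proof given in the paper (\S3.2). The paper does not touch power subgroups of $G$ directly. Instead it argues by contradiction via Corollary~\ref{cor:N=G}: if the core $N$ is dense and proper then $NG' \lneqq G$ or $NG_{0} \lneqq G$. To rule out the first alternative it uses only that $G' = [G,G]$ is closed (Theorem~\ref{thm:main-profinite}) together with the easy finiteness of the abelian quotient $G/G'G^{q}$; to rule out the second, it passes to the semisimple layer and invokes the Mart\'inez--Zelmanov / Saxl--Wilson power-closure result (Corollary~\ref{cor:SaWiMaZe}) for $T^{q}$, which is available for products of finite simple groups and needs no restricted Burnside input. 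The trade-off: your route is shorter and conceptually uniform (``powers are open, done''), but leans on the full power-word theorem; the paper's route decouples the two alternatives and requires only the commutator machinery of~\cite{NN_DS12} plus the much more elementary semisimple power result, thereby avoiding RBP in the final deduction.
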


Serre had proved this assertion in the special case, where $G$ is a
finitely generated pro-$p$ group for some prime~$p$, by a neat and
essentially self-contained argument; compare Section~\ref{sec:Serre}.
The proof of the general theorem is considerably more involved and
makes substantial use of the classification of finite simple groups;
the same is true for several of the main results stated below.

The key theorem in~\cite{NN_DS12} concerns normal subgroups in finite
groups and establishes results about products of commutators of the
following type.  There exists a function $f \colon \mathbb{N}
\rightarrow \mathbb{N}$ such that, if $H \trianglelefteq \Gamma =
\langle y_1, \dots, y_r \rangle$ for a finite group $\Gamma$, then
every element of the subgroup $[H,\Gamma] = \langle [h,g] \mid h \in
H, g \in \Gamma \rangle$ is a product of $f(r)$ factors of the form
$[h_1,y_1] [h_2,y_1^{-1}] \cdots [h_{2r-1},y_r] [h_{2r},y_r^{-1}]$
with $h_1,\dots,h_{2r} \in H$.  Under certain additional conditions on
$H$, a similar conclusion holds based on the significantly weaker
hypothesis that $\Gamma = H \langle y_1,\dots,y_r \rangle = \Gamma'
\langle y_1,\dots,y_r \rangle$, where $\Gamma' = [\Gamma,\Gamma]$
denotes the commutator subgroup.  A more precise version of the result
is stated as Theorem~\ref{thm:main-finite} in
Section~\ref{sec:NS-finite-profinite}.  By standard compactness
arguments, one obtains corresponding assertions for normal subgroups
of finitely generated profinite groups; this is also explained in
Section~\ref{sec:NS-finite-profinite}.

Every compact Hausdorff topological group $G$ is an extension of a
compact connected group $G^\circ$, its identity component, by a
profinite group~$G/G^\circ$.  By the Levi--Mal'cev Theorem, e.g., see
\cite[Theorem~9.24]{KH_SM13}, the connected component $G^\circ$ is
essentially a product of compact Lie groups and thus relatively
tractable.  We state two results whose proofs require the new
machinery developed in~\cite{NN_DS12} that goes beyond the methods
used in~\cite{NN_DS07a,NN_DS07b,NN_DS11}.

\begin{theorem}[Nikolov,
  Segal~\cite{NN_DS12}] \label{thm:further-results-1} Let $G$ be a
  compact Hausdorff topological group. Then every finitely generated
  abstract quotient of $G$ is finite.
\end{theorem}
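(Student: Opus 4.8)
The plan is to peel off, by soft arguments, a single hard configuration and then invoke the Nikolov--Segal machinery. First I would split $G$ along its identity component $G^\circ$, a closed characteristic subgroup with $G/G^\circ$ profinite and $G^\circ$ compact connected. Given $N\trianglelefteq G$ with $G/N$ finitely generated, the group $G/G^\circ N$ is a finitely generated abstract quotient of the profinite group $G/G^\circ$; once the profinite case is settled, $G/G^\circ N$ is finite, so $G^\circ N/N=\ker(G/N\to G/G^\circ N)$ has finite index in $G/N$, hence is finitely generated, and $G^\circ N/N\cong G^\circ/(G^\circ\cap N)$ is a finitely generated abstract quotient of the compact connected group $G^\circ$. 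Thus it suffices to handle $G$ profinite and $G$ compact connected; in the end $G/N$ is an extension of two finite groups.

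\textbf{Reductions in the profinite case.} Let $G$ be profinite and $N\trianglelefteq G$ abstract with $Q:=G/N$ finitely generated, say by the images of $z_1,\dots,z_d\in G$. Putting $G_0:=\overline{\langle z_1,\dots,z_d\rangle}$ one has $G=NG_0$, whence $Q\cong G_0/(N\cap G_0)$; replacing $(G,N)$ by $(G_0,N\cap G_0)$ we may assume $G$ is topologically finitely generated. Next, $G/\overline N$ is a profinite group and an abstract quotient of the finitely generated group $Q$, hence countable, hence finite (an infinite profinite group is uncountable); so $\overline N$ is open, $\overline N/N$ has finite index in $Q$ and is finitely generated, and $\overline N$ remains topologically finitely generated. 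Replacing $(G,N)$ by $(\overline N,N)$ we may further assume $N$ is \emph{dense} in $G$. Finally, by Theorem~\ref{thm:Serre} every finite-index abstract subgroup of $G$ containing the dense subgroup $N$ is open and therefore equals $G$, so $Q$ has no proper subgroup of finite index; it now suffices to prove $Q=1$.

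\textbf{Endgame in the profinite case.} Assume $Q\neq1$. As $Q$ is finitely generated it has a maximal proper normal subgroup (the union of a chain of proper normal subgroups of a finitely generated group is again proper), which pulls back to an abstract normal subgroup $M$ with $N\le M\trianglelefteq G$ such that $G/M$ is simple, finitely generated, and has no proper subgroup of finite index. A finite group with no proper finite-index subgroup is trivial, so $G/M$ is infinite and hence nonabelian. Here is the one genuinely hard point, and the step I expect to be the main obstacle: the key theorem of Nikolov and Segal on products of commutators, Theorem~\ref{thm:main-finite}, applied to the finite continuous quotients of the topologically finitely generated group $G$, produces uniform width bounds for commutator subgroups that cannot coexist with $G/M$ being an infinite nonabelian simple group carrying a dense normal subgroup of finitely generated quotient. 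Ruling out this configuration is precisely where the argument --- like Theorems~\ref{thm:Serre} and~\ref{thm:main-finite} themselves --- leans on the classification of finite simple groups. The contradiction forces $Q=1$.

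\textbf{The compact connected case.} Let $G$ be compact connected and $N\trianglelefteq G$ abstract with $G/N$ finitely generated. By the Levi--Mal'cev Theorem (see \cite[Theorem~9.24]{KH_SM13}), $G/\overline{[G,G]}$ is a compact connected abelian group and hence divisible; a finitely generated divisible abelian group being trivial, the subgroup $\overline{[G,G]}$ maps onto $G/N$, so $G/N$ is an abstract quotient of $\overline{[G,G]}$, which is itself a continuous quotient of a Cartesian product $\prod_{i\in I}L_i$ of simply connected compact simple Lie groups. One then finishes by the Nikolov--Segal analysis of abstract normal subgroups of such products: the groups $L_i$ are abstractly almost simple and uncountable, and this forces any finitely generated abstract quotient of $\prod_{i\in I}L_i$ to be finite. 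This last part rests on the structure theory of compact Lie groups and on facts about finite simple groups of Lie type, and completes the proof.
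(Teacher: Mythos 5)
Your reductions are sound and broadly parallel the paper's, but the endgame of the profinite case — which is exactly the point you flag as ``the main obstacle'' — is not a proof and, as written, would not go through with the ingredient you name.

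Concretely: your splitting of the compact case along $G^\circ$ into a profinite part and a compact connected part is correct, as is the chain of reductions in the profinite case (pass to $\overline{\langle z_1,\dots,z_d\rangle}$, observe $G/\overline N$ is a countable profinite group hence finite, replace $G$ by $\overline N$ to make $N$ dense, and deduce via Theorem~\ref{thm:Serre} that $Q$ has no proper finite-index subgroup). This matches, in a slightly rearranged form, the paper's use of Corollary~\ref{cor:no-inf-res-fin} to reduce to the case where $\Gamma$ has no nontrivial finite images. Where you diverge is what happens next. You pass to a maximal proper normal subgroup of $Q$ and try to derive a contradiction from Theorem~\ref{thm:main-finite} applied to an infinite simple quotient $G/M$. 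Theorem~\ref{thm:main-finite} (equivalently Theorem~\ref{thm:main-profinite}) by itself does not rule this out; its role in the paper is different. The paper uses it, through Corollary~\ref{cor:N=G}, to show that if $NG'=G$ then $NG_0\lneqq G$, and thereby to reduce to $G$ \emph{semisimple}. The semisimple case (Theorem~\ref{thm:semisimple-case}) is then a separate and genuinely hard theorem, proved by classifying the maximal normal abstract subgroups of a strongly complete semisimple profinite group and relying on the Liebeck--Shalev diameter bounds for finite simple groups~\cite{ML_AS01}. So the missing input is not ``width bounds from Theorem~\ref{thm:main-finite}'' but rather Corollary~\ref{cor:N=G} plus Theorem~\ref{thm:semisimple-case}, and your sketch omits both.

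The compact connected part of your argument has the same character: the reduction via divisibility of $G/\overline{[G,G]}$ to an abstract quotient of a Cartesian product of simply connected compact simple Lie groups is a reasonable first step, but the claim that this ``forces any finitely generated abstract quotient to be finite'' is asserted, not proved, and is precisely the content of the hard theorem. Since the paper itself only proves the reduction ``Theorem~\ref{thm:semisimple-case} implies Theorem~\ref{thm:further-results-1} for profinite groups'' and defers both Theorem~\ref{thm:semisimple-case} and the connected case to~\cite{NN_DS12}, a fair comparison is: your reductions are at roughly the paper's level of detail, but you misattribute the decisive step to Theorem~\ref{thm:main-finite} rather than to Corollary~\ref{cor:N=G} together with Theorem~\ref{thm:semisimple-case}, and your route via a maximal normal subgroup of $Q$ bypasses the reduction to semisimple $G$ that makes the decisive step applicable. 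To repair the profinite case along the paper's lines: from $Q$ perfect deduce $NG'=G$, apply Corollary~\ref{cor:N=G} to get $NG_0\lneqq G$, pass to $G/G_0$, use that soluble groups cannot surject onto the perfect group $Q$ to reduce to $G$ semisimple, and then cite Theorem~\ref{thm:semisimple-case}.
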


\begin{theorem}[Nikolov,
  Segal~\cite{NN_DS12}] \label{thm:further-results-2} Let $G$ be a
  compact Hausdorff topological group such that $G/G^\circ$ is
  topologically finitely generated.  Then $G$ has a countably infinite
  abstract quotient if and only if $G$ has an infinite
  virtually-abelian continuous quotient.
\end{theorem}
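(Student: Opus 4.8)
I would treat the two implications separately; they are of very different character, with all the real difficulty living in ``only if''.

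\emph{Proof of $(\Leftarrow)$.} Suppose $G$ has an infinite virtually-abelian continuous quotient $Q$. Since any abstract quotient of $Q$ pulls back to one of $G$, it suffices to produce a countably infinite abstract quotient of $Q$. Being compact and virtually abelian, $Q$ contains a closed abelian subgroup of finite index; intersecting its finitely many conjugates yields a \emph{closed abelian normal} subgroup $P \trianglelefteq Q$ of finite index, which is infinite because $Q$ is. The one non-formal ingredient is the (elementary) fact that every infinite compact abelian group has a countably infinite abstract quotient: by Pontryagin duality $\widehat{P}$ is an infinite discrete abelian group, and a short case analysis — according to whether $\widehat{P}$ contains an element of infinite order, a quasicyclic group, or an infinite direct sum of nontrivial finite cyclic groups — shows that $P$ maps abstractly onto $\mathbb{Q}$ or onto a countably infinite $\mathbb{F}_p$-vector space. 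Fix such a surjection with kernel $B_0 \le P$ and let $B = \bigcap_{q \in Q} B_0^{\,q}$ be its normal core in $Q$. As $P$ is abelian and normal, this is a finite intersection of conjugates, each contained in $P$, so $B \trianglelefteq Q$ and $P/B$ embeds into the finite direct product $\prod_q P/B_0^{\,q}$ of countable groups; hence $P/B$ is countable, and it is infinite since it surjects onto $P/B_0$. As $[Q:P] < \infty$, the group $Q/B$ is countably infinite, which is the quotient we want.

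\emph{Proof of $(\Rightarrow)$.} Suppose $G$ has a countably infinite abstract quotient $G/N$, with $N \trianglelefteq G$ an abstract normal subgroup. First, $H := \overline{N}$ is open in $G$: the continuous quotient $G/\overline{N}$ is compact and is also a quotient of the countable group $G/N$, and a countable compact Hausdorff group is finite (Baire category), so $G/\overline{N}$ is finite. Hence $H^\circ = G^\circ$, the profinite group $H/H^\circ$ is an open subgroup of the topologically finitely generated group $G/G^\circ$ and is therefore itself topologically finitely generated, and $H \trianglelefteq G$ with $N$ a dense normal abstract subgroup of $H$ such that $H/N$ is countably infinite. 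The decisive input, which is exactly what the machinery of~\cite{NN_DS12} (built on Theorem~\ref{thm:main-finite} together with the structure theory of compact groups) is designed to provide, is that for a compact group $H$ with $H/H^\circ$ topologically finitely generated and any normal abstract subgroup $N \trianglelefteq H$, the abstract commutator subgroup $[N,H]$ is \emph{closed} in $H$. Granting this: $[N,H] \le N$ since $N$ is normal, and since $N$ is dense in $H$ the subgroup $[N,H]$ is dense in $[H,H]$ (continuity of the commutator map), so being closed it contains $\overline{[H,H]}$. Thus $\overline{[H,H]} \le N$, and the countably infinite group $H/N$ is a quotient of the compact abelian group $H^{\mathrm{ab}} = H/\overline{[H,H]}$, which is therefore infinite. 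Finally $H$ is an open \emph{normal} subgroup of $G$ with infinite abelianisation: putting $R := \overline{[H,H]} \trianglelefteq G$, the subgroup $H/R$ is an infinite abelian normal subgroup of finite index in $G/R$, so $G/R$ is an infinite virtually-abelian continuous quotient of $G$.

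The main obstacle is the quoted closure property of $[N,H]$, into which essentially all the depth of~\cite{NN_DS12} is concentrated: for $G$ profinite it follows from the uniform commutator-width bounds of Theorem~\ref{thm:main-finite} by a compactness argument, but handling non-closed $N$ and, above all, the connected component $G^\circ$ — where Lie-theoretic facts such as ``every element of a compact connected semisimple Lie group is a commutator'' enter — is where the work lies. A lesser point needing care in $(\Leftarrow)$ is the claim that every infinite compact abelian group has a countably infinite abstract quotient; this is routine, though the torsion case genuinely invokes the structure of infinite abelian torsion groups.
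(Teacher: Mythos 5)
The survey you were given does not in fact prove Theorem~\ref{thm:further-results-2}: it is stated in the introduction as a result from~\cite{NN_DS12}, with the comment that its proof requires machinery beyond what the survey presents, and no argument is offered. So there is no ``paper's proof'' to compare against, and your proposal can only be judged on its own terms.

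On those terms the overall shape is right, but two points deserve care. In $(\Leftarrow)$, your trichotomy for an infinite discrete abelian group $\widehat{P}$ has a gap: if $\widehat{P}$ is torsion with every $p$-primary component finite (for example $\widehat{P} \supseteq \bigoplus_{p} C_p$ over infinitely many primes), then the ``infinite direct sum of nontrivial finite cyclic groups'' it contains involves infinitely many distinct primes, and the dual continuous quotient $\prod_j C_{p_j}$ is not an $\mathbb{F}_p$-vector space for any $p$, so neither of your two named targets covers it. The assertion is nonetheless true --- e.g.\ a non-principal ultraproduct of the fields $\mathbb{F}_{p_j}$ has characteristic $0$, so $\prod_j C_{p_j}$ maps abstractly onto an uncountable-dimensional $\mathbb{Q}$-vector space and hence onto $\mathbb{Q}$ --- but your sketch as written does not reach this case. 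In $(\Rightarrow)$, the reduction to $H = \overline{N}$ open with $H^\circ = G^\circ$ is clean, and you correctly isolate the crux in the closure of $[N,H]$ for a \emph{dense abstract} normal subgroup $N$. Be aware, though, that this is not quite one of the survey's displayed tools: Theorem~\ref{thm:main-profinite} and Theorem~\ref{thm:further-results-3} both concern \emph{closed} normal subgroups, and Corollary~\ref{cor:N=G} yields a dichotomy ($NG' \ne G$ or $NG_0 \ne G$) rather than the inclusion $\overline{[H,H]} \le N$ directly. The obvious attempt to bridge the gap --- pick the symmetric topological generating tuple $y_1,\dots,y_r$ in Theorem~\ref{thm:main-profinite}(2) inside the dense $N$, so that $[H,H] = \{[h_1,y_1]\cdots[h_r,y_r]\}^{*f} \subseteq N$ --- does not work without further argument, because a dense subgroup of a topologically finitely generated profinite group need not contain \emph{any} finite topological generating set (e.g.\ $\bigoplus_p \mathbb{Z}_p$ is dense in $\widehat{\mathbb{Z}}$ but contains no topological generator). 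So the ``key input'' really is being quoted from~\cite{NN_DS12} itself rather than derived from the survey, and your honest labelling of it as the locus of all the depth is appropriate; the rest of your argument (density of $[N,H]$ in $[H,H]$, hence $\overline{[H,H]} \le N$, hence $H^{\mathrm{ab}}$ infinite, hence $G/\overline{[H,H]}$ an infinite virtually-abelian continuous quotient) is correct granted that input.
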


In a recent paper, Nikolov and Segal generalised their results in
\cite{NN_DS12} to obtain the following structural theorem.

\begin{theorem}[Nikolov,
  Segal~\cite{NN_DS14}] \label{thm:further-results-3} Let $G$ be a
  compact Hausdorff group such that $G/G^\circ$ is topologically
  finitely generated.  Then for every closed normal subgroup $H
  \trianglelefteq_\mathrm{c} G$ the abstract subgroup $[H,G] = \langle
  [h,g] \mid h \in H, g \in G \rangle$ is closed in~$G$.
\end{theorem}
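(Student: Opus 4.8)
The plan is to bootstrap the profinite result of Theorem~\ref{thm:main-finite} (applied to $G/G^\circ$) up to the whole of $G$, to dispose of the connected component $G^\circ$ by classical Lie-theoretic means, and then to glue the two analyses together.

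\emph{Reduction to the connected component.} Set $Q = G/G^\circ$ and fix $y_1,\dots,y_r \in G$ whose images topologically generate $Q$; put $\bar H = HG^\circ/G^\circ \trianglelefteq_\mathrm{c} Q$. By the profinite consequence of Theorem~\ref{thm:main-finite}, every element of $[\bar H,Q]$ is a product of $f(r)$ factors of the shape $[\bar h_1,\bar y_1][\bar h_2,\bar y_1^{-1}]\cdots[\bar h_{2r-1},\bar y_r][\bar h_{2r},\bar y_r^{-1}]$ with $\bar h_j \in \bar H$. Since $H$ surjects onto $\bar H$, each such product lifts to a product $y$ of the same shape with $h_j \in H$ and the \emph{fixed} elements $y_i$; thus for every $x \in [H,G]$ there is such a $y$ with $xy^{-1} \in [H,G]\cap G^\circ$. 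Writing $L := [H,G]\cap G^\circ$ and letting $C \subseteq G$ denote the set of all products of $f(r)$ factors of the above shape with the $h_j$ ranging over $H$, we obtain $[H,G] = L\cdot C$, since conversely $L \subseteq [H,G]$ and $C \subseteq [H,G]$. As $C$ is the image of the compact space $H^{2rf(r)}$ under a continuous map it is compact, so $[H,G]$ is closed as soon as $L$ is closed. Hence it suffices to prove that the normal subgroup $L \trianglelefteq G$, which is contained in the compact connected group $G^\circ$, is closed.

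\emph{The connected component.} By the Levi--Mal'cev theorem \cite[Theorem~9.24]{KH_SM13}, $G^\circ = Z_0 S$, where $Z_0 = Z(G^\circ)^\circ$ is a central connected abelian subgroup and $S = \overline{[G^\circ,G^\circ]}$ is a quotient of a (possibly infinite) product of simply connected compact simple Lie groups by a central subgroup; both $Z_0$ and $S$ are characteristic in $G^\circ$, hence normal in $G$. The central factor $Z_0$ is invisible to commutators, while inside $S$ the abstract commutator calculus is rigid: by Gotô's theorem every element of a connected compact semisimple Lie group is a single commutator, and passing to products and central quotients one finds that $[A,S]$ is closed --- indeed boundedly generated, essentially a subproduct of the simple factors --- for every $A \trianglelefteq_\mathrm{c} G$. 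Using the identities $[a,bc]=[a,c][a,b]^c$ and $[ab,c]=[a,c]^b[b,c]$ to peel off the $G^\circ$- and $(H\cap G^\circ)$-contributions of the generators, one reduces the study of $L$ modulo the normal subgroup $[H,G^\circ][H\cap G^\circ,G]$ to that of a ``correction'' that lies in $G^\circ$ and vanishes in $Q$; this correction is governed by the image of $[H,G]$ in the compact group $G/S$, whose identity component $Z_0/(Z_0\cap S)$ is abelian.

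\emph{Where the real work lies.} The profinite reduction above is formal, and the semisimple input is classical; the genuine content of \cite{NN_DS14} is the case in which $G^\circ$ is abelian, together with the gluing, both of which rest on the second, more delicate half of Theorem~\ref{thm:main-finite} --- the one needing only $\Gamma = H\langle y_1,\dots,y_r\rangle = \Gamma'\langle y_1,\dots,y_r\rangle$ under structural (``acceptability'') hypotheses on $H$, rather than finite generation of the ambient group. What one must produce is a bound on the commutator width of $[H,G]$ uniform enough to persist through the passage to $G^\circ$ and to $G/S$; equivalently, one must control the relevant abelian sections of $G^\circ$, viewed as modules over $\mathbb{Z}[[Q]]$, showing that they are finitely generated in the appropriate sense. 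Obtaining such a bound for a merely normal subgroup $H$ is the step I expect to be the main obstacle, and it is precisely the point at which the machinery of \cite{NN_DS12,NN_DS14} goes beyond the methods of \cite{NN_DS07a,NN_DS07b,NN_DS11}.
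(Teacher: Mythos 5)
The survey you are reading does not prove Theorem~\ref{thm:further-results-3}: it merely states the result and refers to~\cite{NN_DS14}, and elsewhere notes only that it generalises the profinite-case fact that $[H,G]$ is closed. So there is no in-paper proof to compare against; your proposal has to be judged on its own terms. The first paragraph is correct and genuinely useful: with $y_1,\dots,y_r \in G$ lifting a symmetric topological generating set of $Q=G/G^\circ$, Theorem~\ref{thm:main-profinite}(2) applied to $Q$ and $\bar H$ produces a compact set $C\subseteq[H,G]$ of bounded commutator products such that $[H,G]=LC$ with $L=[H,G]\cap G^\circ$, and since the product of a closed normal subgroup with a compact set is closed in a compact Hausdorff group, it does suffice to show $L$ is closed.

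The difficulty is that this is exactly where the proof begins, and the rest of the proposal does not carry it through. The subgroup $L$ is not of the form $[H',G']$ for closed subgroups, so neither Theorem~\ref{thm:main-profinite} nor any directly quotable Lie-theoretic statement applies to it as it stands; the ``peeling'' via the standard commutator identities that you gesture at would, at best, express $L$ as generated by various commutator sets together with a correction term in $Z_0$, and you do not explain how that correction is controlled, nor why the semisimple part $[A,S]$ would be closed for an arbitrary closed normal $A$ in the possibly infinite-dimensional setting. You then say, candidly, that the abelian connected case and the gluing are the genuine content of~\cite{NN_DS14} and that you expect the required uniform commutator-width bound for a merely normal $H$ to be the main obstacle. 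That assessment is accurate, but it means the central step --- precisely the part that distinguishes this theorem from the profinite case already contained in Theorem~\ref{thm:main-profinite} --- is identified rather than supplied. As submitted, this is a sensible reduction and a correct diagnosis of where the work lies, not a proof.
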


There are many interesting open questions regarding the algebraic
properties of profinite groups; the introductory survey~\cite{NN12}
provides more information as well as a range of ideas and suggestions
for further investigation.  In Section~\ref{sec:coho} we look at one
possible direction for applying and generalising the results of
Nikolov and Segal, namely in comparing the abstract and the continuous
cohomology of a finitely generated profinite group.

\section{Serre's problem on finite abstract quotients of profinite
  groups} \label{sec:Serre}

\noindent \textbf{\S {\thesection}.1.} A profinite group $G$ is said
to be \emph{strongly complete} if every finite-index abstract subgroup
is open in~$G$.  Equivalently, $G$ is strongly complete if every
finite quotient of $G$ is a continuous quotient.  Clearly, such a
group is uniquely determined by its underlying abstract group
structure as it is its own profinite completion.

It is easy to see that there are non-finitely generated profinite
groups that fail to be strongly complete.  For instance, $G =
\varprojlim_{n \in \mathbb{N}} C_p^{\, n} \cong C_p^{\, \aleph_0}$,
the direct product of a countably infinite number of copies of a
cyclic group $C_p$ of prime order~$p$, has $2^{2^{\aleph_0}}$
subgroups of index $p$, but only countably many of these are open.
This can be seen by regarding the abstract group $G$ as a vector space
of dimension $2^{\aleph_0}$ over a field with $p$ elements.
Implicitly, this simple example uses the axiom of choice.

Without taking a stand on the generalised continuum hypothesis, it is
slightly more tricky to produce an example of two profinite groups
$G_1$ and $G_2$ that are non-isomorphic as topological groups, but
nevertheless abstractly isomorphic.  As an illustration, we discuss a
concrete instance.  In~\cite{JK_13}, Jonathan A.\ Kiehlmann classifies
more generally countably-based abelian profinite groups, up to
continuous isomorphism and up to abstract isomorphism.

\begin{proposition}
  Let $p$ be a prime.  The pro-$p$ group $G = \prod_{i=1}^\infty
  C_{p^i}$ is abstractly isomorphic to $G \times \mathbb{Z}_p$, but
  there is no continuous isomorphism between the groups $G$ and $G
  \times \mathbb{Z}_p$.
\end{proposition}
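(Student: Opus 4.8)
The plan is to treat $G$ and $G\times\mathbb{Z}_p$ as \emph{abstract} abelian groups and to exploit that $\mathbb{Z}_p$, being compact, is algebraically compact (i.e.\ pure-injective) as an abstract group. The non-existence of a continuous isomorphism is an easy separate matter, so I would dispose of it first.

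For that, I would check that $\operatorname{Hom}_{\mathrm{cont}}(G,\mathbb{Z}_p)=0$: a continuous homomorphism $G\to\mathbb{Z}_p$ annihilates each finite factor $C_{p^i}$ (as $\mathbb{Z}_p$ is torsion-free), hence annihilates the dense subgroup $\bigoplus_{i=1}^\infty C_{p^i}$, hence vanishes. Since $G\times\mathbb{Z}_p$ has an obvious continuous surjection onto $\mathbb{Z}_p$, composing a hypothetical continuous isomorphism $G\to G\times\mathbb{Z}_p$ with this projection would produce a continuous surjection $G\to\mathbb{Z}_p$ --- a contradiction. (Equivalently: $\mathbb{Z}_p$ occurs as a continuous quotient of $G\times\mathbb{Z}_p$ but not of $G$.)

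For the abstract isomorphism the key step is to exhibit inside $G$ a \emph{pure} abstract subgroup isomorphic to the countable direct \emph{product} $N:=\prod_{j=1}^\infty\mathbb{Z}_p$. I would fix a partition $\mathbb{N}=\bigsqcup_{j=1}^\infty S_j$ into infinite sets, giving $G\cong\prod_{j=1}^\infty P_j$ with $P_j:=\prod_{i\in S_j}C_{p^i}$. In each $P_j$ let $D_j$ be the image of the diagonal map $\mathbb{Z}_p\to P_j$, $x\mapsto(x\bmod p^i)_{i\in S_j}$; this is injective since $S_j$ is unbounded, so $D_j\cong\mathbb{Z}_p$, and $D_j$ is pure in $P_j$: if $(x\bmod p^i)_{i\in S_j}\in p^kP_j$, then choosing any $i\in S_j$ with $i\ge k$ forces $x\in p^k\mathbb{Z}_p$, so the element lies in $p^kD_j$. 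As purity is preserved by arbitrary direct products, $N\cong\prod_{j=1}^\infty D_j$ is a pure subgroup of $G$. A direct product of algebraically compact groups is algebraically compact, and a pure subgroup which is itself algebraically compact splits off as a direct summand, so $G\cong N\oplus Q$ for some abstract group $Q$; since $N\cong\mathbb{Z}_p\oplus N$ by reindexing,
\[
  G\;\cong\;N\oplus Q\;\cong\;\mathbb{Z}_p\oplus N\oplus Q\;\cong\;\mathbb{Z}_p\oplus G\;\cong\;G\times\mathbb{Z}_p .
\]

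I expect the real work to be conceptual rather than computational: one must hit upon the fact that a torsion-free summand like $\mathbb{Z}_p$ is invisible to the continuous finite quotients that pin down the topology of $G$, yet is forced to be ``absorbed'' abstractly because $\mathbb{Z}_p$ --- and every countable product of copies of it --- is algebraically compact. The supporting facts (compactness implies algebraic compactness; pure-injectivity yields splitting of pure subgroups; purity is inherited by direct products) are standard but should be verified with care, as should the purity computation above. Alternatively, the abstract isomorphism can be read off from the classification of countably based abelian profinite groups up to abstract isomorphism (Kiehlmann, cited above), once one observes that $G$ and $G\times\mathbb{Z}_p$ share the same Ulm--Kaplansky invariants.
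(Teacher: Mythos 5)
Your proof is correct, but it takes a genuinely different route from the paper's. For the easy topological half, the paper notes that the torsion elements are dense in $G$ but not in $G\times\mathbb{Z}_p$, whereas you observe that $\mathrm{Hom}_{\mathrm{cont}}(G,\mathbb{Z}_p)=0$ while $G\times\mathbb{Z}_p$ visibly surjects continuously onto $\mathbb{Z}_p$; both are fine one-line arguments. The substantive difference is in the abstract isomorphism. The paper constructs, via a non-principal ultrafilter on $\mathbb{N}$, an explicit non-continuous surjective homomorphism $\psi\colon G\to\mathbb{Z}_p$ that restricts to the identity on the closed diagonal $H=\overline{\langle(1,1,\dots)\rangle}\cong\mathbb{Z}_p$; this gives an abstract splitting $G=\ker\psi\times H$, and then a separate continuous endomorphism $\vartheta$ of $G$ (a ``difference'' map) with kernel exactly $H$ shows $G/H\cong G$, whence $G\cong G/H\times H\cong G\times\mathbb{Z}_p$. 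You instead invoke the general machinery of algebraic compactness: you locate a pure subgroup $N\cong\prod_j\mathbb{Z}_p$ of $G$ via diagonals over a partition of $\mathbb{N}$ into infinite blocks, and then use that $N$ is pure-injective to split it off, finishing with the self-absorption $N\cong\mathbb{Z}_p\oplus N$. Your purity computation is right (for $i\ge k$ in an unbounded block, $x\bmod p^i\in p^k(\mathbb{Z}/p^i\mathbb{Z})$ forces $x\in p^k\mathbb{Z}_p$), and the supporting structural facts you quote -- compact implies algebraically compact, products of algebraically compact groups are algebraically compact, pure algebraically compact subgroups split -- are all standard. What the paper's argument buys is concreteness and self-containment: the splitting map is written down and the only non-effective input is a single ultrafilter, which the author flags explicitly as the use of choice. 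What your argument buys is conceptual clarity and robustness: it explains \emph{why} copies of $\mathbb{Z}_p$ get absorbed (pure-injectivity), generalises immediately to absorbing $\mathbb{Z}_p^{\aleph_0}$, and, as you note, connects directly to Kiehlmann's classification via Ulm--Kaplansky invariants, which is precisely the reference the paper points to for the broader picture.
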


\begin{proof}
  The torsion elements form a dense subset in the group~$G$, but they
  do not in $G \times \mathbb{Z}_p$.  Hence the two groups cannot be
  isomorphic as topological groups.

  It remains to show that $G$ is abstractly isomorphic to $G \times
  \mathbb{Z}_p$.  Let $\mathfrak{U} \subseteq \mathcal{P}(\mathbb{N})
  = \{ T \mid T \subseteq \mathbb{N} \}$ be a non-principal
  ultrafilter.  This means: $\mathfrak{U}$ is closed under finite
  intersections and under taking supersets; moreover, whenever a
  disjoint union $T_1 \sqcup T_2$ of two sets belongs to
  $\mathfrak{U}$, precisely one of $T_1$ and $T_2$ belongs to
  $\mathfrak{U}$; finally, $\mathfrak{U}$ does not contain any
  one-element sets.  Then $\mathfrak{U}$ contains all co-finite
  subsets of~$\mathbb{N}$, and, in fact, one justifies the existence
  of $\mathfrak{U}$ by enlarging the filter consisting of all
  co-finite subsets to an ultrafilter; this process relies on the
  axiom of choice.

  Informally speaking, we use the ultrafilter $\mathfrak{U}$ as a tool
  to form a consistent limit.  Indeed, with the aid of $\mathfrak{U}$,
  we define
  \begin{equation*}
    \begin{split}
      \psi \colon & G \cong \prod_{i=1}^\infty
      \mathbb{Z}/p^i\mathbb{Z} \longrightarrow \mathbb{Z}_p \cong
      \varprojlim_j
      \mathbb{Z}/p^j\mathbb{Z}, \\
      & x=(x_1+p\mathbb{Z},x_2+p^2\mathbb{Z},\dots) \mapsto
      \varprojlim_j (x \psi_j),
    \end{split}
  \end{equation*}
  where
  \[
  x \psi_j = a + p^j \mathbb{Z} \in \mathbb{Z} / p^j \mathbb{Z} \qquad
  \text{if $U_j(a) = \{ k \geq j \mid x_k \equiv_{p^j} a \} \in
    \mathfrak{U}$}.
  \]
  Observe that
  $\mathfrak{U} \ni \{ k \mid k \geq j \} = U_j(0) \sqcup \dots \sqcup
  U_j(p^j-1)$
  to see that the definition of $\psi_j$ is valid.  It is a routine
  matter to check that $\psi$ is a (non-continuous) homomorphism from
  $G$ onto $\mathbb{Z}_p$.  Furthermore, $G$ decomposes as an abstract
  group into a direct product $G = \ker \psi \times H$, where
  $H = \overline{\langle (1,1,\dots) \rangle} \cong \mathbb{Z}_p$ is
  the closed subgroup generated by $(1,1,\ldots)$.  Consequently,
  $\ker \psi \cong G / H$ as an abstract group.  Finally, we observe
  that
  \[
  \vartheta \colon G \cong \prod_{i=1}^\infty \mathbb{Z}/p^i\mathbb{Z}
  \longrightarrow G \cong \prod_{i=1}^\infty \mathbb{Z}/p^i\mathbb{Z},
  \quad (x_i + p^i\mathbb{Z})_i \mapsto (x_i-x_{i+1} +
  p^i\mathbb{Z})_i
  \]
  is a continuous, surjective homomorphism with $\ker \vartheta = H$.
  This shows that 
  \[
  G \cong G/H \times H \cong G \times \mathbb{Z}_p
  \]
  as abstract groups.
\end{proof}

Theorem~\ref{thm:Serre} states that finitely generated profinite
groups are strongly complete: there are no unexpected finite quotients
of such groups.  What about infinite abstract quotients of finitely
generated profinite groups?  In view of Theorem~\ref{thm:Serre}, the
following proposition applies to all finitely generated profinite
groups.

\begin{proposition}
  Let $G$ be a strongly complete profinite group and $N
  \trianglelefteq G$ a normal abstract subgroup.  If $G/N$ is
  residually finite, then $N \trianglelefteq_\mathrm{c} G$ and $G/N$
  is a profinite group with respect to the quotient topology.
\end{proposition}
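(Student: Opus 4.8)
The plan is to use the two hypotheses in tandem: residual finiteness of $G/N$ will exhibit $N$ as an intersection of finite-index normal abstract subgroups of~$G$, and strong completeness of~$G$ will guarantee that each of these is open, hence closed. First I would unwind the residual finiteness: for every coset $gN \neq N$ in $G/N$ there is a finite-index normal subgroup $\overline{M}_{gN} \trianglelefteq G/N$ with $gN \notin \overline{M}_{gN}$, and $\bigcap_{gN \neq N} \overline{M}_{gN} = \{N\}$. Pulling these back along the canonical projection $\pi \colon G \to G/N$ yields a family $(M_i)_{i \in I}$ of finite-index normal abstract subgroups of~$G$, each containing~$N$, with $\bigcap_{i \in I} M_i = N$.

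Since $G$ is strongly complete, each $M_i$ is open in~$G$, and an open subgroup of a topological group is also closed; hence $N = \bigcap_{i} M_i$ is closed, i.e.\ $N \trianglelefteq_\mathrm{c} G$. Consequently $G/N$, with the quotient topology, is a compact Hausdorff topological group, and $\pi$ is an open continuous surjection, so each $M_i/N = \pi(M_i)$ is an open normal subgroup of finite index in $G/N$, with $\bigcap_{i} M_i/N = \{1\}$. I would then conclude that $G/N$ is profinite, either by noting that its identity component is contained in every open subgroup $M_i/N$ and is therefore trivial, so that $G/N$ is compact, Hausdorff and totally disconnected; or by checking that the continuous homomorphism $G/N \to \prod_{i} G/M_i$ into a product of finite discrete groups is injective and, being a continuous injection from a compact space to a Hausdorff one, is a homeomorphism onto a closed subgroup of a product of finite groups.

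I do not expect a genuine obstacle here: the proposition is essentially the remark that ``residually finite quotient'' supplies enough finite-index normal subgroups while ``strongly complete'' forces these to be open. The only steps that need a little care are the standard topological facts that open subgroups are closed and that a continuous bijection from a compact space onto a Hausdorff space is a homeomorphism; these are precisely what allows the purely algebraic identity $N = \bigcap_i M_i$ to be promoted to the asserted topological statement.
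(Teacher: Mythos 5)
Your argument is correct and matches the paper's: express $N$ as an intersection of finite-index (normal) abstract subgroups via residual finiteness of $G/N$, then invoke strong completeness to make each of these open, hence closed. The paper leaves the final step (that $G/N$ is profinite) implicit, whereas you spell it out, but the underlying idea is identical.
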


\begin{proof}
  The group $N$ is the intersection of finite-index abstract subgroups
  of~$G$.  Since $G$ is strongly complete, each of these is open and
  hence closed in~$G$.
\end{proof}

\begin{corollary} \label{cor:no-inf-res-fin}
  A strongly complete profinite group does not admit any countably
  infinite residually finite abstract quotients.
\end{corollary}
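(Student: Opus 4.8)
The plan is to derive the corollary directly from the preceding proposition by ruling out the two obstructions that could prevent it from applying: the quotient being non-trivial in the relevant sense, and the quotient being merely finite. First I would let $G$ be a strongly complete profinite group and suppose, for contradiction, that $N \trianglelefteq G$ is a normal abstract subgroup such that $Q = G/N$ is residually finite and countably infinite. The aim is to reach an absurdity.

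Applying the previous proposition to this $N$ immediately yields that $N$ is closed in $G$ and that $Q = G/N$, equipped with the quotient topology, is a profinite group. The key remaining observation is that a profinite group is in particular compact Hausdorff, hence a Baire space, so it cannot be countably infinite: a countably infinite Hausdorff space with no isolated points is not Baire, and if it does have an isolated point then, by homogeneity of a topological group, every point is isolated, forcing the group to be discrete and therefore finite by compactness. Either way, a countably infinite profinite group is impossible. This contradicts $|Q| = \aleph_0$, and the corollary follows.

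The only genuinely delicate point is the Baire-category step showing no profinite group is countably infinite; everything else is a direct invocation of the proposition. One should be slightly careful that "countably infinite" is used here to mean exactly countably infinite (not "at most countable"), so that the profinite quotient supplied by the proposition is forced to be both infinite and countable at once — it is this conjunction that the topological argument refutes. If one prefers to avoid Baire category altogether, an alternative is to note that an infinite profinite group has cardinality at least $2^{\aleph_0}$, since an infinite profinite group admits an infinite strictly descending chain of open subgroups and hence surjects onto an infinite product of non-trivial finite groups; this again contradicts countability. I would present the Baire-space version as the cleaner route, mention the cardinality alternative in a parenthetical remark, and keep the write-up to a few lines since the substance resides entirely in the proposition already proved.
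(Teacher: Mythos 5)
Your argument is exactly the intended one: the preceding proposition forces a residually finite abstract quotient to be a profinite group in the quotient topology, and a profinite group, being compact Hausdorff (hence Baire), cannot be countably infinite. The paper leaves this implicit, but both the decomposition and the topological endgame are the same as yours.
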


We remark that a strongly complete profinite group can have countably
infinite abstract quotients.  Indeed, we conclude this section with a
simple argument, showing that the procyclic group $\mathbb{Z}_p$ has
abstract quotients isomorphic to~$\mathbb{Q}$.  Again, this makes
implicitly use of the axiom of choice.

\begin{proposition}
  The procyclic group $\mathbb{Z}_p$ maps non-continuously onto
  $\mathbb{Q}$.
\end{proposition}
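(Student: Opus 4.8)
The plan is to produce a surjective abstract homomorphism $\mathbb{Z}_p \twoheadrightarrow \mathbb{Q}$. Such a map is then automatically non-continuous: by the discussion in the introduction the only proper continuous quotients of $\mathbb{Z}_p$ are the finite cyclic groups $\mathbb{Z}_p / p^k\mathbb{Z}_p$, none of which is isomorphic to $\mathbb{Q}$, so the kernel of any surjection onto $\mathbb{Q}$ fails to be closed.

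The construction rests on one observation: $\mathbb{Z}_p$ is \emph{already} divisible by every prime $\ell \neq p$, because such $\ell$ is a unit in $\mathbb{Z}_p$; only the prime $p$ obstructs divisibility. So I would first note that the localisation $\mathbb{Z}_{(p)} = \{ a/b \in \mathbb{Q} \mid p \nmid b\}$ is a subring of $\mathbb{Z}_p$, with $\mathbb{Q} \cap \mathbb{Z}_p = \mathbb{Z}_{(p)}$ inside $\mathbb{Q}_p$, and then form the abstract quotient $D := \mathbb{Z}_p/\mathbb{Z}_{(p)}$. I claim $D$ is torsion-free and divisible. Torsion-freeness: if $n\bar x = 0$ then $nx \in \mathbb{Z}_{(p)} \subseteq \mathbb{Q}$, so $x \in \mathbb{Q}\cap\mathbb{Z}_p = \mathbb{Z}_{(p)}$ and $\bar x = 0$. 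Divisibility: given $x \in \mathbb{Z}_p$ and $n = p^a m$ with $p\nmid m$, choose $c \in \mathbb{Z}$ with $x \equiv c \pmod{p^a\mathbb{Z}_p}$ and put $y = m^{-1}(x-c)p^{-a} \in \mathbb{Z}_p$; then $ny = x-c \equiv x \pmod{\mathbb{Z}_{(p)}}$, i.e.\ $n\bar y = \bar x$.

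Now $D \neq 0$, since $|\mathbb{Z}_{(p)}| = \aleph_0$ but $|\mathbb{Z}_p| = 2^{\aleph_0}$. A nonzero torsion-free divisible abelian group is a nonzero $\mathbb{Q}$-vector space, so, choosing a basis $(e_i)_{i\in I}$ of $D$ and an index $i_0 \in I$, the coordinate map $\sum_i q_i e_i \mapsto q_{i_0}$ is a surjection $D \twoheadrightarrow \mathbb{Q}$. Composing $\mathbb{Z}_p \twoheadrightarrow D \twoheadrightarrow \mathbb{Q}$ finishes the argument. (Equivalently, one may first pass to $\mathbb{Z}_p/\mathbb{Z}$, which is already divisible by the same computation, and then quotient by its torsion subgroup $\mathbb{Z}_{(p)}/\mathbb{Z}$ to reach the same $D$.)

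There is no serious difficulty here --- this is the ``simple argument'' promised above. The only non-elementary ingredient is the existence of a $\mathbb{Q}$-basis of the infinite-dimensional space $D$, which is where the axiom of choice enters; the part one must be careful not to overlook is precisely the divisibility claim for $D$, i.e.\ that passing to $\mathbb{Z}_p/\mathbb{Z}_{(p)}$ repairs $p$-divisibility while keeping the quotient torsion-free.
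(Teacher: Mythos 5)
Your proof is correct, and it takes a genuinely different route from the paper's. The paper embeds $\mathbb{Z}_p$ into $\mathbb{Q}_p$ viewed as a $\mathbb{Q}$-vector space, takes a $\mathbb{Q}$-basis of $\mathbb{Q}_p$ consisting of $p$-adic integers, and \emph{perturbs} the basis (replacing $x_i$ by $x_i - p^{-i}$) to manufacture a complement $W$ of $\mathbb{Q}$ in $\mathbb{Q}_p$ satisfying $\mathbb{Z}_p + W = \mathbb{Q}_p$; the isomorphism $\mathbb{Z}_p/(\mathbb{Z}_p\cap W)\cong \mathbb{Q}_p/W\cong\mathbb{Q}$ then does the work. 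You instead quotient by $\mathbb{Z}_{(p)}$ up front, verify directly that $D=\mathbb{Z}_p/\mathbb{Z}_{(p)}$ is nonzero, torsion-free and divisible, and invoke the structure theorem for divisible abelian groups to recognise $D$ as a $\mathbb{Q}$-vector space. The two arguments are secretly about the same object: since $\mathbb{Z}_p+\mathbb{Q}=\mathbb{Q}_p$ and $\mathbb{Z}_p\cap\mathbb{Q}=\mathbb{Z}_{(p)}$, one has $D\cong\mathbb{Q}_p/\mathbb{Q}$, and the paper's $W$ is just (the preimage of) a complement of a chosen line in this vector space. What your version buys is conceptual clarity: the delicate $-p^{-i}$ twist in the paper, whose sole purpose is to ensure $\mathbb{Z}_p+W=\mathbb{Q}_p$, disappears, and the use of choice is isolated to a single clean invocation (existence of a basis of a $\mathbb{Q}$-vector space). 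What the paper's version buys is self-containedness: it needs no structure theory of divisible groups, only linear algebra over $\mathbb{Q}$, and the whole construction is visibly explicit once the basis is fixed. Both are fine; your computations (the characterisation $\mathbb{Q}\cap\mathbb{Z}_p=\mathbb{Z}_{(p)}$, torsion-freeness, and the divisibility argument via $y=m^{-1}(x-c)p^{-a}$) all check out.
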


\begin{proof}
  Clearly, $\mathbb{Z}_p$ is a spanning set for the
  $\mathbb{Q}$-vector space $\mathbb{Q}_p$.  Thus there exists an
  ordered $\mathbb{Q}$-basis $x_0 = 1$, $x_1$, \ldots, $x_\omega$,
  \ldots\ for $\mathbb{Q}_p$ consisting of $p$-adic integers; here
  $\omega$ denotes the first infinite ordinal number.  Set
  \[
  y_0 = x_0 = 1, \quad y_i = x_i - p^{-i} \text{ for $1 \leq i <
    \omega$} \quad \text{and} \quad y_\lambda = x_\lambda \text{ for
    $\lambda \geq \omega$.}
  \]
  The $\mathbb{Q}$-vector space $\mathbb{Q}_p$ decomposes into a
  direct sum $\mathbb{Q} \oplus W = \mathbb{Q}_p$, where $W$ has
  $\mathbb{Q}$-basis $y_1, \dots, y_\omega, \dots$, inducing a natural
  surjection $\eta \colon \mathbb{Q}_p \rightarrow \mathbb{Q}$ with
  kernel~$W$.  By construction, $\mathbb{Z}_p + W = \mathbb{Q}_p$ so
  that $\mathbb{Z}_p / (\mathbb{Z}_p \cap W) \cong (\mathbb{Z}_p +
  W)/W \cong \mathbb{Q}$.
\end{proof}

To some extent this basic example is rather typical; cf.\
Theorems~\ref{thm:further-results-1} and \ref{thm:further-results-2}.
 
\bigskip

\noindent \textbf{\S {\thesection}.2.} We now turn our attention to
finitely generated profinite groups.  Around 1970 Serre discovered
that finitely generated pro-$p$ groups are strongly complete, and he
asked whether this is true for arbitrary finitely generated profinite
groups.  Here and in the following, $p$ denotes a prime.

\begin{theorem}[Serre] \label{thm:Serre-pro-p}
  Finitely generated pro-$p$ groups are strongly complete.
\end{theorem}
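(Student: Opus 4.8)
The plan is to reduce the problem to a statement about the Frattini-type behaviour of finitely generated pro-$p$ groups. Let $G$ be a finitely generated pro-$p$ group and let $H \leq G$ be an abstract subgroup of finite index. Since $G$ is a pro-$p$ group, replacing $H$ by the intersection of its finitely many $G$-conjugates we may assume $H \trianglelefteq G$; and since any subgroup of $p$-power index containing $H$ whose openness we establish forces openness of $H$ (by finite descending induction on the index), it suffices to treat the case where $G/H$ is cyclic of order $p$, i.e. where $H \supseteq \Phi$ for some open normal subgroup with $G/\Phi$ elementary abelian — but the cleanest formulation is simply: it suffices to show that every abstract subgroup of index $p$ in $G$ is open. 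Such a subgroup contains the abstract subgroup $G^p[G,G] := \langle x^p, [x,y] \mid x,y \in G\rangle$, so the whole theorem follows once we prove the key claim:

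\begin{quote}
\emph{If $G$ is a finitely generated pro-$p$ group, then the abstract subgroup $G^p[G,G]$ is open in $G$} (equivalently, it equals the closed Frattini subgroup $\Phi(G)$, equivalently $G^p[G,G]$ has finite index).
\end{quote}

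First I would record the topological side: the closure $\overline{G^p[G,G]}$ is the Frattini subgroup $\Phi(G)$, which is open because $G$ is finitely generated (the quotient $G/\Phi(G)$ is a finite elementary abelian $p$-group, as $d(G) < \infty$); write $d = \dim_{\mathbb F_p} G/\Phi(G)$. So the content is entirely that the \emph{abstract} subgroup $G^p[G,G]$ already has finite index, i.e. that one does not need to take a closure. The second and main step is a counting/compactness argument bounding word length. Fix topological generators $g_1,\dots,g_d$ of $G$. I would argue, by a compactness argument passing to finite quotients $G/N$ (with $N$ ranging over open normal subgroups), that there is an integer $m = m(d)$, \emph{independent of $N$}, such that in every finite $p$-group $\overline G = G/N$ on $d$ generators, every element of $\overline G^{\,p}[\overline G,\overline G]$ is a product of at most $m$ factors of the form $x^p$ or $[x,y]$. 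Granting such a uniform bound, the set of products of at most $m$ such factors in $G$ is a closed set (being the continuous image of a compact set $G^{2m} \to G$) which surjects onto $G^p[G,G] \cdot N$ for every $N$, hence is all of $\overline{G^p[G,G]} = \Phi(G)$; therefore $G^p[G,G] = \Phi(G)$ is open, and we are done.

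The hard part is precisely the existence of the \emph{uniform} bound $m(d)$ on the number of $p$-th-power-and-commutator factors needed to express elements of $\overline G^{\,p}[\overline G,\overline G]$ in a $d$-generated finite $p$-group $\overline G$. I would obtain it by induction on the order of $\overline G$, using the lower central / Frattini series: writing $\Phi = \Phi(\overline G)$, one has $\overline G^{\,p}[\overline G,\overline G] = \Phi$, and $\Phi$ is generated (as a normal subgroup, hence — since $\overline G/\Phi$ is abelian of rank $d$ — with a controlled number of generators, roughly $d$ of the form $g_i^{\,p}$ together with the $\binom d2$ basic commutators $[g_i,g_j]$) by explicit elements; then one pushes the expression down the series $\Phi \supseteq [\Phi,\overline G]\Phi^p \supseteq \cdots$, at each layer rewriting a product of many factors into a bounded-length product using the abelianness of the layer and collection identities such as $x^p y^p \equiv (xy)^p \pmod{\gamma_2(\langle x,y\rangle)^p \gamma_p}$ and the commutator identities $[x,yz]=[x,z][x,y]^z$. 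The number of layers of $\Phi$ is not bounded in terms of $d$ alone, so the subtlety — and this is where Serre's ``neat'' argument really bites — is to organise the rewriting so the factor count does \emph{not} grow with the number of layers: one shows instead that $\Phi = \Phi^p[\Phi,G] \cdot \{\text{bounded product of }g_i^p\text{'s and }[g_i,g_j]\text{'s}\}$ and iterates, exploiting that $\bigcap_k (\Phi^p[\Phi,G])^{(k)}$-type intersections vanish by compactness. Assembling these estimates yields $m(d)$, completing the proof.
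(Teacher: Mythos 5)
Your high-level strategy coincides with the paper's: reduce to showing that the abstract subgroup $G^p[G,G]$ already has finite index (equals $\Phi(G)$), and close the argument by finite induction on the index. Your observation that a bound on word length \emph{uniform over all finite quotients} would give closedness by a compactness argument is also exactly right. But the two steps you defer are precisely where the proof lives, and both are genuine gaps.

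First, your reduction to index $p$ silently assumes that an \emph{abstract} normal subgroup of finite index in a pro-$p$ group has $p$-power index, so that $G/H$ is a finite $p$-group possessing a subgroup of index $p$ containing the image of $H$. This is not automatic: $H$ is not assumed closed, $G/H$ is not a continuous quotient, and nothing a priori forces its order to be a power of $p$. The paper disposes of this separately (Lemma~\ref{lem:index}), by the same compactness device you invoke elsewhere: with $m = \lvert G:H\rvert = p^r\widetilde m$, $p\nmid\widetilde m$, the set of $m$th powers is closed, contains $g^{p^r}$ modulo every $N\trianglelefteq_\mathrm{o} G$ (as $G/N$ is a $p$-group), hence contains $g^{p^r}$ outright; so $G/H$ has $p$-power exponent and $\widetilde m=1$.

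Second, and more seriously, the existence of the uniform bound $m(d)$ is acknowledged by you as ``the hard part'' and then only gestured at; the sketch of pushing products down the Frattini series while ``organising the rewriting so the factor count does not grow'' is not an argument, and it is not clear it can be made into one along those lines. The paper sidesteps the entire issue with a clean classical fact about nilpotent groups, Lemma~\ref{lem:comm}'s $(\dagger)$: if $\Gamma=\langle\gamma_1,\dots,\gamma_d\rangle$ is nilpotent then $[\Gamma,\Gamma]=\{[x_1,\gamma_1]\cdots[x_d,\gamma_d] : x_i\in\Gamma\}$, proved by a short induction on nilpotency class. Applying this in every finite quotient $G/N$ with fixed topological generators $a_1,\dots,a_d$ shows that the closed set $X=\{[x_1,a_1]\cdots[x_d,a_d] : x_i\in G\}$ equals $\overline{[G,G]}$, so $[G,G]$ is closed; then $G^{\{p\}}[G,G]$ is closed (compact image of $G\times[G,G]$ under $(x,y)\mapsto x^p y$), equals $\Phi(G)$, and is open. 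In effect the uniform bound is $d+1$, obtained in one stroke without any iteration over layers or collection formulas. I would encourage you to replace your sketch of the bound by $(\dagger)$, and to supply the $p$-power-index lemma: with those two ingredients your plan becomes a complete proof, essentially identical to Serre's.
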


It is instructive to recall a proof of this special case of
Theorem~\ref{thm:Serre}, which we base on two auxiliary lemmata.

\begin{lemma} \label{lem:index} Let $G$ be a finitely generated
  pro-$p$ group and $H \leq_\mathrm{f} G$ a finite-index subgroup.
  Then $\lvert G : H \rvert$ is a power of~$p$.
\end{lemma}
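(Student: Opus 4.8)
The plan is to reduce at once to the case of a normal subgroup, and then, assuming some prime $\pi \neq p$ divides the index, to produce a procyclic pro-$p$ subgroup of $G$ that maps onto a cyclic group of order $\pi$ — which is impossible, since procyclic pro-$p$ groups have only $p$-power (cyclic) finite quotients.

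Concretely, I would proceed as follows. First, replace $H$ by its normal core $N = \bigcap_{g \in G} H^g$: this is a normal subgroup of finite index contained in~$H$, and the finite group $Q := G/N$ satisfies $\lvert G : H \rvert = \lvert Q : H/N \rvert$, so $\lvert G : H \rvert$ divides $\lvert Q \rvert$; it thus suffices to show that $\lvert Q \rvert$ is a power of~$p$. Suppose not, and fix a prime $\pi \neq p$ dividing $\lvert Q \rvert$. By Cauchy's theorem there is an element $x \in Q$ of order $\pi$; choose a preimage $\tilde{x} \in G$ with $x = \tilde{x}N$, and set $C := \overline{\langle \tilde{x} \rangle}$. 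Being the closure of a (cyclic) abelian subgroup in a pro-$p$ group, $C$ is an abelian, topologically monogenic, pro-$p$ group, hence procyclic pro-$p$, so $C \cong \mathbb{Z}_p$ or $C \cong \mathbb{Z}/p^j\mathbb{Z}$ for some $j$. The subgroup $C \cap N$ has index at most $\lvert Q \rvert$ in~$C$, and it is open in~$C$: this is automatic when $C$ is finite, while for $C \cong \mathbb{Z}_p$ one writes $m := \lvert C : C \cap N \rvert = p^a u$ with $p \nmid u$, notes that $m$ annihilates $C/(C\cap N)$ so that $mC \leq C\cap N$, and observes that $mC = p^a u C = p^a C$ is open because $u$ is a unit in $\mathbb{Z}_p$. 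Therefore the image $CN/N \cong C/(C\cap N)$ is a finite cyclic $p$-group; but it contains the element $x$ of order $\pi \neq p$, a contradiction. Hence $\lvert Q \rvert$, and with it $\lvert G:H \rvert$, is a power of~$p$.

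The argument is short and essentially self-contained, so there is no single hard step; the points to get right are (i) the elementary structure theory of procyclic pro-$p$ groups, and (ii) the observation — itself a trivial special case of what is being proved — that every finite-index subgroup of $\mathbb{Z}_p$ is open. The one genuinely necessary manoeuvre is the passage to $Q = G/N$ at the start: one needs an actual \emph{finite group} (rather than the mere coset space $G/H$) in which to invoke Cauchy's theorem and then pick a preimage whose closure can be analysed. It is worth remarking that finite generation of $G$ plays no role in this lemma; it enters the proof of Serre's theorem only through the companion statement about the Frattini subgroup of a finitely generated pro-$p$ group.
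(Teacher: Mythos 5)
Your proof is correct, and it takes a genuinely different route from the paper's. You reduce to the normal core $N$, suppose a prime $\pi \neq p$ divides $\lvert G : N\rvert$, lift an element of order $\pi$ in $G/N$ (via Cauchy) to an element $\tilde x \in G$, and then analyse the procyclic pro-$p$ group $C = \overline{\langle \tilde x\rangle}$; the elementary structure of $\mathbb{Z}_p$ (or of a finite cyclic $p$-group) forces $C/(C\cap N)$ to be a $p$-group, contradicting the presence of an element of order $\pi$. The one delicate point — that $C\cap N$ is open in $C$ even though $N$ is not assumed closed in $G$ — you handle correctly by sandwiching the open subgroup $p^a C = mC$ inside $C\cap N$. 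The paper argues instead with the closed set $X = G^{\{m\}}$ of $m$th powers, $m = \lvert G:H\rvert = p^r\widetilde m$ with $p\nmid\widetilde m$: since each $G/N$ is a finite $p$-group, $g^{p^r}\in XN$ for every open normal $N$, whence $g^{p^r}\in\bigcap_N XN = X \subseteq H$ by closedness, so $G/H$ has exponent $p^r$ and therefore $p$-power order. Your argument is local and reduces everything to $\mathbb{Z}_p$; the paper's is a global compactness-plus-closure argument, and that template (closed verbal-type set, intersect $XN$ over open $N$) is exactly what reappears in Lemma~\ref{lem:comm} and Corollary~\ref{cor:Frattini}, which is presumably why the author chose it as a warm-up. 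Your closing remark is also accurate: neither proof uses finite generation of $G$, which enters the proof of Theorem~\ref{thm:Serre-pro-p} only via the Frattini-subgroup step and the induction on index.
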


\begin{proof}
  Indeed, replacing $H$ by its core in $G$, viz., the subgroup
  $\bigcap_{g \in G} H^g \trianglelefteq_\mathrm{f} G$, we may assume
  that $H$ is normal in~$G$.  Thus $G/H$ is a finite group of
  order~$m$, say, and the set $X = G^{\{m\}} = \{ g^m \mid g \in G \}$
  of all $m$th powers in~$G$ is contained in~$H$.  Being the image of
  the compact space $G$ under the continuous map $x \mapsto x^m$, the
  set $X$ is closed in the Hausdorff space~$G$.

  Since $G$ is a pro-$p$ group, it has a base of neighbourhoods of $1$
  consisting of open normal subgroups $N \trianglelefteq_\mathrm{o} G$
  of $p$-power index.  Hence $G/N$ is a finite $p$-group for every $N
  \trianglelefteq_\mathrm{o} G$.  Let $g \in G$.  Writing $m = p^r
  \widetilde{m}$ with $p \nmid \widetilde{m}$, we conclude that
  $g^{p^r} \in XN$ for every $N \trianglelefteq_\mathrm{o} G$.  Since
  $X$ is closed in $G$, this yields
  \[
  g^{p^r} \in \bigcap\nolimits_{N \trianglelefteq_\mathrm{o} G} XN = X
  \subseteq H.
  \]
  Consequently, every element of $G/H$ has $p$-power order, and
  $\lvert G : H \rvert = m = p^r$.
\end{proof}

For any subset $X$ of a topological group $G$ we denote by
$\overline{X}$ the topological closure of $X$ in~$G$.

\begin{lemma} \label{lem:comm} Let $G$ be a finitely generated pro-$p$
  group.  Then the abstract commutator subgroup $[G,G]$ is closed
  in~$G$, i.e., $[G,G] = \overline{[G,G]}$.
\end{lemma}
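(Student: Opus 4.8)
The plan is to combine a compactness principle with a bound on the commutator width of $G$. Fix topological generators $x_1, \dots, x_d$ of $G$ and put $H = \overline{[G,G]}$; as $[G,G] \subseteq H$ is clear, the task is to show $H \subseteq [G,G]$.

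For $n \in \mathbb{N}$ let
\[
C_n \;=\; \bigl\{\, [a_1,b_1]\,[a_2,b_2] \cdots [a_n,b_n] \mid a_i, b_i \in G \,\bigr\}
\]
be the image of the compact space $G^{2n}$ under the corresponding continuous word map; since $G$ is Hausdorff, each $C_n$ is closed in $G$. As $[b,a] = [a,b]^{-1}$ and $[1,1] = 1$, the sets $C_n$ form an ascending chain with $\bigcup_{n} C_n = [G,G]$. Consequently $[G,G]$ is closed the moment this chain stabilises, that is, as soon as $[G,G] = C_{n_0}$ for some fixed $n_0$. So the assertion reduces to showing that $G$ has \emph{finite commutator width}: there should exist $n_0$, depending only on $d$, such that every element of $[G,G]$ is a product of $n_0$ commutators.

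This in turn follows from a uniform statement about the continuous finite quotients of $G$. Each such quotient $G/U$, with $U \trianglelefteq_{\mathrm{o}} G$, is a finite $p$-group generated by the images of $x_1, \dots, x_d$. Suppose one knows that every $d$-generated finite $p$-group has commutator width at most $n_0 = n_0(d)$. Then for any $w \in [G,G]$ the coset $wU$ lies in $[G,G]U/U = (G/U)' = C_{n_0}U/U$, so $w \in C_{n_0}U$ for every open normal $U$; since $C_{n_0}$ is closed, this yields $w \in \bigcap_U C_{n_0}U = C_{n_0}$. Hence $[G,G] = C_{n_0}$ is closed, and therefore $H = [G,G]$.

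The remaining point -- the commutator-width bound for $d$-generated finite $p$-groups -- is the technical heart of the matter and the step I expect to be the main obstacle. I would approach it through the lower central series of such a group $P$: the top layer is handled by the bilinearity of the commutator map in quotients of nilpotency class $2$, which shows that modulo $[P',P]$ every element of $[P,P]$ is a product of at most $\binom{d}{2}$ commutators of the form $[x_i^{e}, x_j]$; the deeper layers are then controlled by commutator collection (the Hall--Petrescu identities), which in a finite $p$-group permits one to rewrite $p$-th powers of commutators, and more generally the higher basic commutators, as products of boundedly many ordinary commutators, keeping the total number of factors bounded in terms of $d$ alone. This is exactly the place where the pro-$p$ hypothesis does essential work: the analogous width bound is \emph{false} for general finite groups, and obtaining a workable replacement for arbitrary profinite groups is precisely the difficult theorem of Nikolov and Segal alluded to in the introduction, the proof of which invokes the classification of finite simple groups. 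Granting the width bound, the argument above concludes the proof.
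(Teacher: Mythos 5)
Your reduction to a width statement is exactly the right idea and matches the paper's strategy: fix topological generators, note that the set of bounded products of commutators is a closed (compact) subset of $G$, and then intersect the sets $C_{n_0}N$ over all open normal $N$. Up to that point the argument is sound. However, the ingredient you isolate as ``the technical heart'' --- a bound $n_0(d)$ on the commutator width of every $d$-generator finite $p$-group --- is not proved, and the route you sketch does not obviously close the gap. Working down the lower central series layer by layer, using bilinearity in class $2$ and Hall--Petrescu collection for the deeper layers, accumulates new commutator factors at each step; since the nilpotency class of a $d$-generator finite $p$-group is unbounded, this naive accounting gives no bound depending only on~$d$. You would need an argument that absorbs the lower layers into the expression already written, not one that appends to it.

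The paper avoids this difficulty by using a sharper and cleaner fact: in any nilpotent group $\Gamma=\langle\gamma_1,\dots,\gamma_d\rangle$ one has
\[
[\Gamma,\Gamma]=\{\,[x_1,\gamma_1]\cdots[x_d,\gamma_d]\mid x_1,\dots,x_d\in\Gamma\,\},
\]
i.e.\ \emph{exactly} $d$ commutators suffice, and moreover they can be taken with the fixed generators as right-hand entries. This is proved by a short induction on the nilpotency class: modulo the last term $\gamma_c(\Gamma)$ one applies the inductive hypothesis, and the central correction term $z\in\gamma_c(\Gamma)=\prod_i[\gamma_{c-1}(\Gamma),\gamma_i]$ is then \emph{absorbed} into the existing factors via the identity $[x_iu_i,\gamma_i]=[x_i,\gamma_i][u_i,\gamma_i]$ (valid here because the relevant commutators land in $\gamma_{c+1}=1$). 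This gives the width bound $n_0(d)=d$ outright and, crucially, the closed set one intersects with the $N$'s is the image of $G^{d}$ under the single word map $(x_1,\dots,x_d)\mapsto[x_1,a_1]\cdots[x_d,a_d]$ with $a_1,\dots,a_d$ a fixed topological generating set. Your compactness framing is fine; to complete the proof you should replace the Hall--Petrescu sketch with this inductive lemma (or an equivalent width bound with an actual proof).
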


\begin{proof}
  We use the following fact about finitely generated nilpotent groups
  that can easily be proved by induction on the nilpotency class:
  \begin{quote}
    $(\dagger)$ if $\Gamma = \langle \gamma_1, \dots, \gamma_d \rangle$ is a
    nilpotent group then
    \[ 
    [\Gamma,\Gamma] = \{ [x_1,\gamma_1] \cdots [x_d,\gamma_d] \mid
    x_1, \dots, x_d \in \Gamma \}.
    \]
  \end{quote}

  Suppose that $G$ is topologically generated by $a_1, \dots, a_d$,
  i.e., $G = \overline{\langle a_1, \dots a_d \rangle}$.  Being the
  image of the compact space $G \times \dots \times G$ under the
  continuous map $(x_1,\dots,x_d) \mapsto [x_1,a_1] \dots [x_d,a_d]$,
  the set
  \[
  X = \{ [x_1,a_1] \dots [x_d,a_d] \mid x_1, \dots, x_d \in G \}
  \subseteq [G,G]
  \]
  is closed in the Hausdorff space~$G$.  Using $(\dagger)$, this yields
  \[
  \overline{[G,G]} = \bigcap\nolimits_{N \trianglelefteq_\mathrm{o} G}
  [G,G] N = \bigcap\nolimits_{N \trianglelefteq_\mathrm{o} G} X N = X
  \subseteq [G,G],
  \]
  and thus $[G,G] = \overline{[G,G]}$.
\end{proof}

The Frattini subgroup $\Phi(G)$ of a profinite group $G$ is the
intersection of all its maximal open subgroups.  If $G$ is a finitely
generated pro-$p$ group, then $\Phi(G) = \overline{G^p [G,G]}$ is open
in~$G$.  Here $G^p = \langle g^p \mid g \in G \rangle$ denotes the
abstract subgroup generated by the set $G^{\{p\}} = \{ g^p \mid g \in
G \}$ of all $p$th powers.

\begin{corollary} \label{cor:Frattini} Let $G$ be a finitely generated
  pro-$p$ group.  Then $\Phi(G)$ is equal to
  \[
  G^p [G,G] = G^{\{p\}} [G,G] = \{ x^p y \mid x \in G \text{ and } y
  \in [G,G] \}.
  \]
\end{corollary}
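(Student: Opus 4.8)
The plan is to reduce the statement to a computation in the abelianisation of $G$. By Lemma~\ref{lem:comm} the abstract subgroup $[G,G]$ is closed in $G$, hence $Q = G/[G,G]$ is again an abelian pro-$p$ group; write $\pi \colon G \to Q$ for the canonical projection. I would aim to prove the chain of equalities
\[
G^p [G,G] \;=\; G^{\{p\}} [G,G] \;=\; \pi^{-1}\bigl(Q^{\{p\}}\bigr),
\]
and that this common value is closed in $G$; granting this, the corollary follows at once, since $\overline{G^p[G,G]} = G^p[G,G]$ then forces $\Phi(G) = \overline{G^p[G,G]} = G^p[G,G] = G^{\{p\}}[G,G]$, while $G^{\{p\}}[G,G] = \{\, x^p y \mid x \in G,\ y \in [G,G] \,\}$ is merely the definition of the product of the two subsets.

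The key observation is that in the \emph{abelian} group $Q$ the set of $p$-th powers $Q^{\{p\}} = \{\, q^p \mid q \in Q \,\}$ is already a subgroup: a product of $p$-th powers is a $p$-th power because $Q$ is commutative, and likewise for inverses. Moreover $Q^{\{p\}}$ is the image of the compact space $Q$ under the continuous map $q \mapsto q^p$, hence compact and therefore closed in the Hausdorff group $Q$. Consequently $\pi^{-1}(Q^{\{p\}})$ is a closed subgroup of $G$, and it visibly contains $[G,G] = \ker \pi$.

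It then remains to identify this preimage. Unwinding the definition, $g \in \pi^{-1}(Q^{\{p\}})$ if and only if $\pi(g) = \pi(h)^p$ for some $h \in G$, i.e.\ if and only if $g \in h^p [G,G]$ for some $h \in G$; thus $\pi^{-1}(Q^{\{p\}}) = \bigcup_{h \in G} h^p [G,G] = G^{\{p\}}[G,G]$. Since $G^p$ is generated by $G^{\{p\}}$ and the closed subgroup $\pi^{-1}(Q^{\{p\}})$ contains both $G^{\{p\}}$ and $[G,G]$, it also contains $G^p[G,G]$; the reverse inclusion $G^{\{p\}}[G,G] \subseteq G^p[G,G]$ is clear, so all three sets coincide, as wanted.

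I expect that the only genuine subtlety — and the reason one cannot argue directly inside $G$ — is that $G^{\{p\}}$ need not be a subgroup of $G$ itself; passing to $Q$, where commutativity turns it into one, is exactly what makes the compactness argument for closedness go through. Everything else is routine bookkeeping, resting on the two lemmata and the description $\Phi(G) = \overline{G^p[G,G]}$ recalled above.
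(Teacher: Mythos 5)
Your proof is correct and rests on the same two pillars as the paper's: Lemma~\ref{lem:comm} to get $[G,G]$ closed, and a compactness argument to conclude that $G^{\{p\}}[G,G]$ is closed, whence $\Phi(G) = \overline{G^p[G,G]} = G^p[G,G]$. The only difference is presentational: the paper obtains closedness directly by exhibiting $G^{\{p\}}[G,G]$ as the image of the compact space $G \times [G,G]$ under $(x,y) \mapsto x^p y$, whereas you pass to the quotient $Q = G/[G,G]$, note that the set of $p$-th powers $Q^{\{p\}}$ is a closed subgroup there, and pull back; this is a touch more conceptual (it makes visible why the product is a subgroup) but is not a genuinely different route.
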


\begin{proof} 
  Indeed, $G^p [G,G] = G^{\{p\}} [G,G]$, because $G/[G,G]$ is abelian.
  By Lemma~\ref{lem:comm}, $[G,G]$ is closed in~$G$.  Being the image
  of the compact space $G \times [G,G]$ under the continuous map $(x,y)
  \mapsto x^p y$, the set $G^{\{p\}} [G,G]$ is closed in the Hausdorff
  space~$G$.  Thus $\Phi(G) = \overline{G^p [G,G]} = G^{\{p\}} [G,G]$.
\end{proof}

\begin{proof}[Proof of Theorem~\ref{thm:Serre-pro-p}]
  Let $H \leq_\mathrm{f} G$.  Replacing $H$ by its core in $G$, we may
  assume that $H \trianglelefteq_\mathrm{f} G$.  Using
  Lemma~\ref{lem:index}, we argue by induction on $\lvert G : H \rvert
  = p^r$.  If $r=0$ then $H = G$ is open in~$G$.  Now suppose that $r
  \geq 1$.
  
  From Corollary~\ref{cor:Frattini} we deduce that $M = H \Phi(G)$ is
  a proper open subgroup of~$G$.  Since $M$ is a finitely generated
  pro-$p$ group and $H \trianglelefteq_\mathrm{f} M$ with $\lvert M :
  H \rvert < \lvert G : H \rvert$, induction yields $H \leq_\mathrm{o}
  M \leq_\mathrm{o} G$.  Thus $H$ is open in~$G$.
\end{proof}

For completeness, we record another consequence of the proof of
Lemma~\ref{lem:comm} that can be regarded as a special case of
Corollary~\ref{cor:N=G} below.

\begin{corollary} \label{cor:N=G-pro-p}
  Let $G$ be a finitely generated pro-$p$ group and $N \trianglelefteq
  G$ a normal abstract subgroup.  If $N [G,G] = G$ then $N = G$.  
\end{corollary}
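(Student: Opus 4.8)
The plan is to use the hypothesis to locate a topological generating set of $G$ \emph{inside} the abstract normal subgroup $N$, and then to feed that generating set into the commutator identity established in the proof of Lemma~\ref{lem:comm}.

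First I would observe that $[G,G] \subseteq \Phi(G)$: in a pro-$p$ group every maximal open subgroup has index $p$, hence is normal with abelian quotient and therefore contains $[G,G]$. Thus the hypothesis $N[G,G]=G$ forces $N\Phi(G)=G$; equivalently, the image of $N$ in the finite elementary abelian $p$-group $G/\Phi(G)$ is everything. Writing $d=\dim_{\mathbb{F}_p}(G/\Phi(G))$ for the minimal number of topological generators of $G$, I can then choose $b_1,\dots,b_d \in N$ whose images form an $\mathbb{F}_p$-basis of $G/\Phi(G)$.

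The key step is to promote this to topological generation: by the Burnside basis theorem for pro-$p$ groups --- available because $\Phi(G)=\overline{G^p[G,G]}$ is open (see the discussion before Corollary~\ref{cor:Frattini}) --- the elements $b_1,\dots,b_d$ satisfy $G=\overline{\langle b_1,\dots,b_d\rangle}$. (Concretely: for each $K\trianglelefteq_\mathrm{o}G$ the images of the $b_i$ generate the finite $p$-group $G/K$, since they span $G/\Phi(G)$ and $\Phi(G)K/K=\Phi(G/K)$; hence $\overline{\langle b_1,\dots,b_d\rangle}=\bigcap_{K\trianglelefteq_\mathrm{o}G}\langle b_1,\dots,b_d\rangle K=G$.) Now I apply Lemma~\ref{lem:comm} with the generating set $b_1,\dots,b_d$: its proof shows that for \emph{any} finite topological generating set of $G$ the abstract commutator subgroup coincides with the set of corresponding products of commutators, so
\[
[G,G]=\{\,[x_1,b_1]\cdots[x_d,b_d]\mid x_1,\dots,x_d\in G\,\}.
\]
Since $N\trianglelefteq G$ and $b_i\in N$, every factor $[x_i,b_i]$ lies in $N$, whence $[G,G]\subseteq N$; combining with the hypothesis, $G=N[G,G]=N$.

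The part I expect to be the real crux is the passage from ``$N$ surjects onto $G/\Phi(G)$'' to ``$N$ contains a topological generating set of $G$''. This is the only place where finite generation is genuinely used --- through the openness of $\Phi(G)$ and the Burnside basis theorem --- and it is exactly what makes Lemma~\ref{lem:comm} applicable with generators drawn from $N$ rather than from an arbitrary generating set of $G$. The rest is routine manipulation with commutators and normality.
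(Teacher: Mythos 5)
Your proposal is correct and takes essentially the same route as the paper's proof: from $N[G,G]=G$ deduce $N\Phi(G)=G$, extract a topological generating set of $G$ from $N$ via the Burnside basis theorem, and then invoke the argument of Lemma~\ref{lem:comm} with that generating set to conclude $[G,G]\subseteq N$. You have simply spelled out the details (openness of $\Phi(G)$, the lift from $G/\Phi(G)$ to topological generation, and the role of normality of $N$) that the paper leaves implicit in ``Arguing as in the proof of Lemma~\ref{lem:comm}''.
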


\begin{proof}
  Suppose that $N [G,G] = G$.  Then $N \Phi(G) = G$ and $N$ contains a
  set of topological generators $a_1, \ldots, a_d$ of $G$.  Arguing as
  in the proof of Lemma~\ref{lem:comm} we obtain $[G,G] \subseteq N$
  and thus $N = N [G,G] = G$.
\end{proof}

\bigskip

\noindent \textbf{\S {\thesection}.3.} There were several
generalisations of Serre's result to other classes of finitely
generated profinite groups, most notably by Brian Hartley~\cite{BH79}
dealing with poly-pronilpotent groups, by Consuelo Mart\'inez and Efim
Zelmanov~\cite{CM_EZ96} and independently by Jan Saxl and John S.\
Wilson~\cite{JS_JW97}, each team dealing with direct products of
finite simple groups, and finally by Segal~\cite{DS_00} dealing with
prosoluble groups.

The result of Mart\'inez and Zelmanov, respectively Saxl and Wilson,
on powers in non-abelian finite simple groups relies on the
classification of finite simple groups and leads to a slightly more
general theorem.  A profinite group $G$ is called \emph{semisimple} if
it is the direct product of non-abelian finite simple groups.

\begin{theorem}[Mart\'inez and Zelmanov~\cite{CM_EZ96}; Saxl and
  Wilson~\cite{JS_JW97}] \label{thm:SaWiMaZe} Let $G = \prod_{i \in
    \mathbb{N}} S_i$ be a semisimple profinite group, where each $S_i$
  is non-abelian finite simple.  Then $G$ is strongly complete if and
  only if there are only finitely many groups $S_i$ of each
  isomorphism type.
\end{theorem}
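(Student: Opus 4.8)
The plan is to prove the two implications separately; we may as well assume the index set $\mathbb{N}$ is genuinely infinite, since a finite product of finite simple groups is finite and hence trivially strongly complete. For the substantial direction, assume that each isomorphism type occurs among the $S_i$ only finitely often; because there are only finitely many finite simple groups of any given order, this is equivalent to $\lvert S_i \rvert \to \infty$. To show that $G$ is strongly complete it suffices to check that every \emph{normal} finite-index abstract subgroup $H \trianglelefteq G$ is open: an arbitrary finite-index abstract subgroup contains its core, which is a finite-index normal abstract subgroup and hence open by the normal case, so the subgroup itself is open, being a finite union of cosets of its core. So fix $H \trianglelefteq G$ with $m = \lvert G : H \rvert$. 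Then $G/H$ has exponent dividing $m$, so the abstract subgroup $G^m = \langle g^m \mid g \in G \rangle$ generated by all $m$th powers is contained in $H$, and it is enough to prove that $G^m$ is open in $G$.

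This is the point at which the theorem of Mart\'inez--Zelmanov and of Saxl--Wilson on powers in finite simple groups~\cite{CM_EZ96,JS_JW97} --- and, through it, the classification of finite simple groups --- enters: for the fixed integer $m$ there exist constants $c = c(m)$ and $N = N(m)$ such that every non-abelian finite simple group $S$ with $\lvert S \rvert \geq N$ satisfies $S = S^{\{m\}} \cdots S^{\{m\}}$ ($c$ factors), that is, every element of $S$ is a product of $c$ $m$th powers of elements of $S$. Let $J = \{ i \in \mathbb{N} \mid \lvert S_i \rvert \geq N \}$, a cofinite set, and let $P = \prod_{i \in J} S_i \leq G$ be the corresponding open, finite-index subgroup consisting of the elements supported on $J$. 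Given $g = (g_i)_i \in P$, choose for each $i \in J$ elements $y_{i,1}, \dots, y_{i,c} \in S_i$ with $g_i = y_{i,1}^{\, m} \cdots y_{i,c}^{\, m}$, and set $y_{i,k} = 1$ for $i \notin J$; then the $c$ elements $Y_k = (y_{i,k})_i \in G$ satisfy $Y_1^{\, m} \cdots Y_c^{\, m} = g$, whence $g \in G^m$. Thus $P \leq G^m$, so $G^m$ contains an open subgroup and is therefore open, and consequently $H \supseteq G^m$ is open. The crucial feature here is that the number $c$ of power factors is bounded \emph{uniformly} over the infinitely many indices $i \in J$, for otherwise $g$ could not be exhibited as a product of finitely many elements of $G$; supplying such a uniform bound is precisely the role of the power theorem.

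For the converse, suppose some isomorphism type $S$ occurs infinitely often, say $S_i \cong S$ for all $i$ in an infinite set $I$. The coordinate projection $G \rightarrow \prod_{i \in I} S_i \cong S^{\aleph_0}$ is a continuous surjection of compact groups, hence an open map, so the preimage of a non-open finite-index abstract subgroup of $\prod_{n \in \mathbb{N}} S$ would be a non-open finite-index abstract subgroup of $G$; it therefore suffices to show that $\prod_{n \in \mathbb{N}} S$ is not strongly complete. Fix a non-principal ultrafilter $\mathfrak{U}$ on $\mathbb{N}$ and let $K = \{ (x_n)_n \in \prod_n S \mid \{ n \mid x_n = 1 \} \in \mathfrak{U} \}$ be the kernel of the natural homomorphism onto the ultrapower $\prod_{\mathfrak{U}} S$. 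Since $S$ is finite this ultrapower is isomorphic to $S$ --- for any $(x_n)_n$ the sets $\{ n \mid x_n = s \}$ with $s \in S$ partition $\mathbb{N}$, so exactly one of them lies in $\mathfrak{U}$, and the natural map $S \to \prod_{\mathfrak{U}} S$ is a bijection --- so $K$ has finite index $\lvert S \rvert$ in $\prod_n S$. On the other hand $K$ contains the restricted direct product $\bigoplus_n S$, which is dense in $\prod_n S$, so $\overline{K} = \prod_n S \neq K$; hence $K$ is not closed, and a finite-index abstract subgroup of a compact group that fails to be closed cannot be open.

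The one genuine obstacle is the Mart\'inez--Zelmanov / Saxl--Wilson theorem on powers in finite simple groups invoked in the forward direction; granting that, the only real subtlety is the uniformity of the bound $c$ exploited above, whereas the converse is a routine ultrafilter construction.
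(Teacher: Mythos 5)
Your argument is correct, and for the implication $\Leftarrow$ (only finitely many copies of each isomorphism type implies strongly complete) it follows the paper's own route: pass to the normal core $H$, set $m = \lvert G : H \rvert$, invoke the Mart\'inez--Zelmanov/Saxl--Wilson power theorem together with the classification of finite simple groups to produce a cofinite set $J$ of indices on which the $m$th-power width of $S_i$ is uniformly bounded by $c(m)$, and conclude that the open subgroup $\prod_{i\in J} S_i$ lies inside $G^m \leq H$, whence $H$ is open. You phrase the power result with the hypothesis ``$\lvert S\rvert \geq N(m)$'' while the paper's $(\ddagger)$ uses ``the exponent of $S$ does not divide $m$''; these are interchangeable once one appeals to CFSG to bound the orders of non-abelian finite simple groups of bounded exponent, a step both you and the paper need to carry out. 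Where you go beyond the paper is the converse $\Rightarrow$: the paper explicitly declines to prove it (``We do not recall the full proof\ldots''), whereas you give a correct and self-contained argument. After reducing to $S^{\aleph_0}$ via the open coordinate projection (and correctly noting that a preimage under an open map of a non-open subgroup cannot be open), you take a non-principal ultrafilter $\mathfrak{U}$ and observe that the kernel $K$ of the natural surjection $\prod_n S \to \prod_{\mathfrak{U}} S \cong S$ is a normal abstract subgroup of index $\lvert S \rvert$ containing the dense restricted direct product $\bigoplus_n S$, hence not closed and so not open. This is the natural analogue, in the semisimple setting, of the paper's earlier observation that $C_p^{\,\aleph_0}$ is not strongly complete. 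Altogether, your proof matches the paper where the paper gives details, and supplies a clean argument for the direction the paper omits.
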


In particular, whenever a semisimple profinite group $G$ as in the
theorem is finitely generated, there are only finitely many factors of
each isomorphism type and the group is strongly complete.  We do not
recall the full proof of Theorem~\ref{thm:SaWiMaZe}, but we explain
how the implication `$\Leftarrow$' can be derived from the following
key ingredient proved in \cite{CM_EZ96,JS_JW97}:
\begin{quote}
  $(\ddagger)$ for every $n \in \mathbb{N}$ there exists $k \in
  \mathbb{N}$ such that for every non-abelian finite simple group $S$
  whose exponent does not divide $n$,
  \[
  S = \{ x_1^{\, n} \cdots x_k^{\, n} \mid x_1, \dots, x_k \in S \}.
  \]
\end{quote}
Supposed that $G = \prod_{i \in \mathbb{N}} S_i$ as in
Theorem~\ref{thm:SaWiMaZe}, with only finitely many groups $S_i$ of
each isomorphism type, and let $H \leq_\mathrm{f} G$.  Replacing $H$
by its core we may assume that $H \trianglelefteq_\mathrm{f} G$.  Put
$n = \lvert G : H \rvert$ and choose $k \in \mathbb{N}$ as in
$(\ddagger)$.  Then $H$ contains $X = \{ x_1^{\, n} \cdots x_k^{\, n}
\mid x_1, \dots, x_k \in G \} \supseteq \prod_{i \geq j} S_i$, where
$j \in \mathbb{N}$ is such that the exponent of $S_i$ does not divide
$n$ for $i \geq j$, and hence $H$ is open in~$G$.  The existence of
the index $j$ is guaranteed by the classification of finite simple groups.

Observe that implicitly we have established the following corollary.

\begin{corollary} \label{cor:SaWiMaZe} Let $G$ be a semisimple
  profinite group and $q \in \mathbb{N}$.  Then $G^q = \langle g^q
  \mid g \in G \rangle$, the abstract subgroup generated by all $q$th
  powers, is closed in~$G$.
\end{corollary}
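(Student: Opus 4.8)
The plan is to derive Corollary~\ref{cor:SaWiMaZe} by mimicking the argument sketched above for the implication `$\Leftarrow$' of Theorem~\ref{thm:SaWiMaZe}, but keeping track of which factors of $G$ are affected. Write $G = \prod_{i \in \mathbb{N}} S_i$ with each $S_i$ non-abelian finite simple. Fix $q \in \mathbb{N}$ and apply $(\ddagger)$ with $n = q$ to obtain $k \in \mathbb{N}$ such that $S = \{ x_1^{\,q} \cdots x_k^{\,q} \mid x_1, \dots, x_k \in S \}$ for every non-abelian finite simple group $S$ whose exponent does not divide~$q$. Let $J = \{ i \in \mathbb{N} \mid \exp(S_i) \nmid q \}$. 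The first step is to observe that $G^q$ contains the set $X = \{ x_1^{\,q} \cdots x_k^{\,q} \mid x_1, \dots, x_k \in G \}$, and that, since powers and products are computed coordinatewise, $X$ projects onto $S_i$ in each coordinate $i \in J$; hence $X \supseteq \prod_{i \in J} S_i =: K$, where we regard $K$ as a closed normal subgroup of $G$ in the obvious way. In particular $K \leq G^q$.

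**Reducing to a finite quotient.**
The second step is to pass to $G/K \cong \prod_{i \notin J} S_i$, which has exponent dividing~$q$: indeed each factor $S_i$ with $i \notin J$ satisfies $\exp(S_i) \mid q$ by definition of~$J$. Consequently $(G/K)^q = 1$, so that $G^q \cdot K / K = 1$, i.e. $G^q \leq K$. Combining this with the inclusion $K \leq G^q$ from the first step yields $G^q = K = \prod_{i \in J} S_i$, which is manifestly a closed subgroup of~$G$. This completes the proof.

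**Remarks on the obstacles.**
The only genuine input is $(\ddagger)$, whose proof (resting on the classification of finite simple groups) we are explicitly allowed to assume; everything else is elementary bookkeeping about products of finite groups. The one point that needs a moment's care is the claim $X \supseteq \prod_{i \in J} S_i$: this is not the assertion that $X$ is a subgroup, but rather that, because the defining operations are coordinatewise and the $J$-coordinates range independently over all of $\prod_{i \in J} S_i$ (each $S_i$ with $i \in J$ being covered by $q$-power products of length~$k$), the image of $X$ under the projection to $\prod_{i \in J} S_i$ is everything; since $X$ also contains elements supported on the remaining coordinates one must note that $X$, being closed under the ambient group structure only via $G^q$, still \emph{contains} the full subproduct $\prod_{i \in J} S_i$ after one observes that $1 \in S_i$ for $i \notin J$ can be hit simultaneously. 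I expect this coordinatewise surjectivity argument to be the main (though still routine) obstacle; no compactness argument is needed, in contrast to Lemmas~\ref{lem:index} and~\ref{lem:comm}, since $G^q \leq K$ already forces closedness once we know $K \leq G^q$.
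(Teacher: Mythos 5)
Your argument is correct and is essentially the proof that the paper leaves implicit after its discussion of the implication `$\Leftarrow$' of Theorem~\ref{thm:SaWiMaZe}: apply $(\ddagger)$ with $n=q$ to get $\prod_{i\in J} S_i \subseteq X \subseteq G^q$ with $J=\{i \mid \exp(S_i)\nmid q\}$, then observe the reverse inclusion $G^q \subseteq \prod_{i\in J} S_i$ since $q$th powers vanish coordinatewise outside~$J$. Your version is slightly more explicit in pinning down $G^q = \prod_{i\in J} S_i$ exactly, and you are right that once this identity is in hand no separate compactness argument is needed.
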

 
\bigskip

\noindent \textbf{\S {\thesection}.4.} The groups $[G,G]$ and $G^q$
featuring in the discussion above are examples of verbal subgroups
of~$G$.  We briefly summarise the approach of Nikolov and Segal that
led to the original proof of Theorem~\ref{thm:Serre}
in~\cite{NN_DS07a}; the theorem is derived from a `uniformity result'
concerning verbal subgroups of finite groups.

Let $d \in \mathbb{N}$, and let $w = w(X_1,\dots,X_r)$ be a group
word, i.e., an element of the free group on $r$ generators $X_1,
\dots, X_r$.  A \emph{$w$-value} in a group $G$ is an element of the
form $w(x_1, x_2, \dots, x_r)$ or $w(x_1,x_2,\dots, x_r)^{-1}$ with
$x_1,x_2,\dots, x_r \in G$.  The \emph{verbal subgroup} $w(G)$ is the
subgroup generated (algebraically, whether or not $G$ is a topological
group) by all $w$-values in $G$.  The word $w$ is \emph{$d$-locally
  finite} if every $d$-generator group $H$ satisfying $w(H) = 1$ is
finite.  Finally, a \emph{simple commutator} of length $n \geq 2$ is a
word of the form $[X_1, \dots, X_n]$, where $[X_1,X_2] = X_1^{-1}
X_2^{-1} X_1 X_2$ and $[X_1, \dots, X_n] = [[X_1, \dots, X_{n-1}],
X_n]$ for $n>2$.  It is well-known that the verbal subgroup
corresponding to the simple commutator of length $n$ is the $n$th term
of the lower central series.

Suppose that $w$ is $d$-locally finite or that $w$ is a simple
commutator.  In~\cite{NN_DS07a}, Nikolov and Segal prove that there
exists $f = f(w,d) \in \mathbb{N}$ such that in every $d$-generator
finite group $G$ every element of the verbal subgroup $w(G)$ is equal
to a product of $f$ $w$-values in $G$.  This `quantitative' statement
about (families of) finite groups translates into the following
`qualitative' statement about profinite groups.  If $w$ is $d$-locally
finite, then in every $d$-generator profinite group $G$, the verbal
subgroup $w(G)$ is open in~$G$.  From this one easily deduces
Theorem~\ref{thm:Serre}.  Similarly, considering simple commutators
Nikolov and Segal prove that each term of the lower central series of
a finitely generated profinite group $G$ is closed.

By a variation of the same method and by appealing to Zelmanov's
celebrated solution to the restricted Burnside problem, one
establishes the following result.

\begin{theorem}[Nikolov, Segal~\cite{NN_DS11}] \label{thm:q-th-power}
  Let $G$ be a finitely generated profinite group.  Then the subgroup
  $G^q = \langle g^q \mid g \in G \rangle$, the abstract subgroup
  generated by $q$th powers, is open in~$G$ for every $q \in
  \mathbb{N}$.
\end{theorem}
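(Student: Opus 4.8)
The plan is to reduce Theorem~\ref{thm:q-th-power} to a quantitative statement about finite groups — in the same spirit as the reduction recalled above for $d$-locally finite words — and then to invoke Zelmanov's theorem in order to pin down the index of $G^q$ in~$G$. The point of departure is the following uniformity result, which I regard as the substantial part of the argument: for all $d, q \in \mathbb{N}$ there is $f = f(d,q) \in \mathbb{N}$ such that in every $d$-generator finite group $Q$ every element of the verbal subgroup $Q^q$ is a product of at most $f$ $q$th powers, i.e.
\[
Q^q = \{ x_1^{\, q} \cdots x_f^{\, q} \mid x_1, \dots, x_f \in Q \}.
\]
Unlike a $d$-locally finite word, the power word $X^q$ need not be $d$-locally finite — an infinite $d$-generator group of exponent dividing $q$ may exist, by the negative solution of the unrestricted Burnside problem — so this cannot simply be quoted from the earlier uniformity result. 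Instead one argues by a variant of the Nikolov--Segal method: using the classification of finite simple groups one reduces, much as in the statement $(\ddagger)$, to bounded power-width statements in finite simple, quasisimple and almost simple groups, and treats the soluble and the intermediate sections by separate arguments. This is the step I expect to be the main obstacle; the remainder of the proof is soft.

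Granting the finite uniformity result, let $G$ be a profinite group containing a dense subgroup generated by $d$ elements, and put
\[
X = \{ x_1^{\, q} \cdots x_f^{\, q} \mid x_1, \dots, x_f \in G \} \subseteq G^q .
\]
Being the image of the compact space $G \times \dots \times G$ under the continuous map $(x_1,\dots,x_f) \mapsto x_1^{\, q} \cdots x_f^{\, q}$, the set $X$ is closed in the Hausdorff space~$G$. For every open normal subgroup $N \trianglelefteq_\mathrm{o} G$ the quotient $G/N$ is a $d$-generator finite group, and since verbal subgroups map onto verbal subgroups under surjective homomorphisms, the finite uniformity result yields $G^q N/N = (G/N)^q = XN/N$, that is $G^q N = XN$. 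Hence
\[
\overline{G^q} = \bigcap\nolimits_{N \trianglelefteq_\mathrm{o} G} G^q N = \bigcap\nolimits_{N \trianglelefteq_\mathrm{o} G} XN = \overline{X} = X \subseteq G^q ,
\]
so that $G^q = \overline{G^q}$ is closed in~$G$.

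Finally I would check that the closed subgroup $G^q = \overline{G^q}$ has finite index in~$G$; being closed of finite index it is then open. Consider the profinite group $\overline{G} = G/G^q$: it is topologically generated by $d$ elements, and every element $x$ of $\overline{G}$ satisfies $x^q = 1$, so every continuous finite quotient of $\overline{G}$ is a $d$-generator finite group of exponent dividing~$q$. By Zelmanov's positive solution of the restricted Burnside problem there is a bound $B = B(d,q)$ on the order of every such group; thus the continuous finite quotients of $\overline{G}$ all have order at most~$B$, and since $\overline{G}$ is the inverse limit of these quotients it is itself finite. Therefore $\lvert G : G^q \rvert = \lvert \overline{G} \rvert < \infty$, and $G^q$ is open in~$G$, as required.
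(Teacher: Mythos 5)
Your proposal reproduces the strategy that the paper sketches but does not prove in full (deferring the hard finite group-theoretic ingredient to \cite{NN_DS11}, which draws on the machinery of \cite{NN_DS07a}): a bounded-width result for $q$th powers in $d$-generator finite groups gives, via a compactness argument, that $G^q$ is closed, and Zelmanov's solution to the restricted Burnside problem shows that the profinite quotient $G/G^q$ has continuous finite quotients of bounded order and is therefore finite, whence $G^q$ is closed of finite index and thus open. You are also right to flag that the power word $X^q$ fails to be $d$-locally finite in general, so that the bounded-width statement cannot be quoted from the earlier Nikolov--Segal uniformity result for locally finite words but requires its own CFSG-dependent proof.
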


The approach in~\cite{NN_DS07a} is based on quite technical results
concerning products of `twisted commutators' in finite quasisimple
groups that are established in~\cite{NN_DS07b}.  The proofs
in~\cite{NN_DS11} make use of the full machinery in~\cite{NN_DS07a}.
Ultimately these results all rely on the classification of finite
simple groups.

\begin{remark}
  An independent justification of Theorem~\ref{thm:q-th-power} would
  immediately yield a new proof of Theorem~\ref{thm:Serre}.
  Furthermore, Andrei Jaikin-Zapirain has shown that the use of the
  positive solution of the restricted Burnside problem in proving
  Theorem~\ref{thm:q-th-power} is to some extent inevitable;
  see~\cite[Section~5.1]{AJ08}.
\end{remark}

We refer to the survey article~\cite{JW11} for a more thorough
discussion of the background to Serre's problem and further
information on finite-index subgroups and verbal subgroups in
profinite groups.  A comprehensive account of verbal width in groups
is given in \cite{DS09}.

\section{Nikolov and Segal's results on finite and profinite
  groups} \label{sec:NS-finite-profinite}

\noindent
\textbf{\S {\thesection}.1.} In this section we discuss the new
approach in~\cite{NN_DS12}.  The key theorem concerns normal subgroups
in finite groups.  A finite group $H$ is almost-simple if
$S \trianglelefteq H \leq \mathrm{Aut}(S)$ for some non-abelian finite
simple group~$S$.  For a finite group $\Gamma$, let $d(\Gamma)$ denote
the minimal number of generators of $\Gamma$, write
$\Gamma' = [\Gamma,\Gamma]$ for the commutator subgroup of $\Gamma$
and set
\begin{align*}
  \Gamma_0 & = \bigcap \{ T \trianglelefteq \Gamma \mid \Gamma/T
  \text{ almost-simple} \} \\ 
  & = \bigcap \{ \mathrm{C}_\Gamma(M) \mid M \text{ a non-abelian simple
    chief factor} \},
\end{align*}
where the intersection over an empty set is naturally interpreted
as~$\Gamma$.  To see that the two descriptions of $\Gamma_0$ agree,
recall that the chief factors of $\Gamma$ arise as the minimal normal
subgroups of arbitrary quotients of~$\Gamma$.  Further, we remark that
$\Gamma/\Gamma_0$ is semisimple-by-(soluble of derived length at
most~$3$), because the outer automorphism group of any simple group is
soluble of derived length at most~$3$.  This strong form of the
Schreier conjecture is a consequence of the classification of finite
simple groups.

For $X \subseteq \Gamma$ and $f \in \mathbb{N}$ we write $X^{*f} = \{
x_1 \cdots x_f \mid x_1,\dots,x_f \in X \}$.
  
\begin{theorem}[Nikolov, Segal~\cite{NN_DS12}] \label{thm:main-finite}
  Let $\Gamma$ be a finite group and $\{y_1,\dots,y_r\} \subseteq
  \Gamma$ a symmetric subset, i.e., a subset that is closed under
  taking inverses.  Let $H \trianglelefteq \Gamma$.

  \textup{(1)} If $H \subseteq \Gamma_0$ and $H \langle y_1,\dots,y_r
  \rangle = \Gamma' \langle y_1,\dots,y_r \rangle = \Gamma$ then
  \[
  \langle [h,g] \mid h \in H, g \in \Gamma \rangle = \left\{ [h_1,y_1]
    \cdots [h_r,y_r] \mid h_1,\dots,h_r \in H \right\}^{*f},
  \]
  where $f = f(r,d(\Gamma)) = O(r^6 d(\Gamma)^6)$.

  \textup{(2)} If $\Gamma = \langle y_1,\dots,y_r \rangle$ then the
  conclusion in \textup{(1)} holds without assuming $H \subseteq
  \Gamma_0$ and with better bounds on~$f$.
\end{theorem}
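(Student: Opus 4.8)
The plan is to establish the theorem by induction on $|\Gamma|$, peeling off one chief factor at a time and treating abelian and non-abelian chief factors by completely different means. Write $Y=\langle y_1,\dots,y_r\rangle$ and $W=\{[h_1,y_1]\cdots[h_r,y_r]\mid h_1,\dots,h_r\in H\}$. Since $H\trianglelefteq\Gamma$, every $[h_i,y_i]$ lies in $[H,\Gamma]$, so $W^{*f}\subseteq[H,\Gamma]$ for all $f$; the content is the reverse inclusion, that is, a bound on the $W$-width of elements of $[H,\Gamma]$ that is uniform in $\Gamma$ once $r$ and $d(\Gamma)$ are fixed.

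\emph{Reduction to a chief factor.} Pick a minimal normal subgroup $A\trianglelefteq\Gamma$. If $A\le H$, pass to $\Gamma/A$, $H/A$: the hypotheses are inherited --- for part~(1) one checks that $H/A$ still lies inside $(\Gamma/A)_0$ and that the generation conditions persist --- so induction yields a bounded $W$-expression modulo $A$, which one lifts and corrects by an element of $[H,\Gamma]\cap A$, leaving only the single chief factor $A$ to handle at the cost of a bounded overhead. If $A\not\le H$, then $A\cap H=1$, whence $[H,A]=1$ and $H$ embeds in $\Gamma/A$, and a similar, slightly more delicate descent applies. Iterating, one reduces to the case in which $H$ is itself a chief factor of $\Gamma$: either elementary abelian --- an irreducible $\mathbb{F}_p\Gamma$-module --- or a product $S_1\times\dots\times S_k$ of pairwise isomorphic non-abelian simple groups permuted transitively by $\Gamma$. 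In part~(1) the hypothesis $H\subseteq\Gamma_0$ excludes the second possibility, since a non-abelian $H$ would give $H\le\Gamma_0\le\mathrm{C}_\Gamma(H)$, which is absurd; thus part~(1) becomes, after the reductions, a purely module-theoretic statement, whereas part~(2) must also confront the semisimple case.

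\emph{The abelian case.} Writing $H$ additively, $[h,g]$ is the operator $g-1$ applied to $h$, so $[H,\Gamma]$ is the $\mathbb{F}_p\Gamma$-submodule generated by the subspaces $(g-1)H$, while $W=\sum_{i=1}^{r}(y_i-1)H$ is already an $\mathbb{F}_p$-subspace. Since $\Gamma=HY$, the images of $y_1,\dots,y_r$ generate $\Gamma/\mathrm{C}_\Gamma(H)$; irreducibility of $H$ then forces $\sum_i(y_i-1)H$ to be either $0$ (the trivial central case) or all of $H=[H,\Gamma]$, so a single abelian layer needs only $f=1$. The real effort in part~(1) is therefore the bookkeeping of the induction --- assembling the layer-by-layer solutions into a global one while keeping the number of $W$-factors polynomial in $r$ and $d(\Gamma)$ --- which is where the hypothesis $\Gamma=\Gamma'Y$ is used repeatedly and where the structure of the semisimple-by-(soluble of derived length $\le 3$) quotient $\Gamma/\Gamma_0$, governed by the strong form of the Schreier conjecture, enters; this is the point at which the classification of finite simple groups is needed for part~(1).

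\emph{The non-abelian case, and the main obstacle.} For part~(2) the remaining case $H=S_1\times\dots\times S_k$ is the crux. It requires a width bound for twisted commutators in finite (quasi)simple groups of the type proved in~\cite{NN_DS07b}: each element of a simple factor must be written as a bounded product of expressions $[a,b^{t}]$ with $t$ ranging over the images of the $y_i$, after which the solution is spread over the $\Gamma$-orbit of $S_1$ using the permutation action. This step genuinely requires the classification of finite simple groups, both for the case-by-case covering estimates and for the facts about $\mathrm{Aut}(S)$ that make the orbit argument go through. I expect the hard part to be exactly this quasisimple input, together with the quantitative control needed to reach the bound $f=O(r^6 d(\Gamma)^6)$ in part~(1): constants have to be tracked through every reduction. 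The elementary commutator calculus ($[a,bc]=[a,c][a,b]^c$, the Hall--Witt identity) and the module-theoretic arguments above are, by comparison, routine.
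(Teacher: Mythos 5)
Your outline recapitulates the methodology of Nikolov and Segal's earlier work~\cite{NN_DS07a,NN_DS07b} --- chief-factor induction combined with width bounds for twisted commutators in quasisimple groups --- rather than the proof in~\cite{NN_DS12}, which is what the theorem statement cites. The key new ingredient of~\cite{NN_DS12}, and the one the surrounding text of the paper explicitly singles out, is a Gowers-type combinatorial principle in the form of the Babai--Nikolov--Pyber product theorem \cite[Corollary~2.6]{LB_NN_LP08}: if subsets $X_1,\dots,X_t$ of a finite group $\Gamma$ satisfy $\prod_{i=1}^t \lvert X_i\rvert \ge \lvert\Gamma\rvert^t\,\ell(\Gamma)^{2-t}$, where $\ell(\Gamma)$ is the minimal dimension of a non-trivial real representation of~$\Gamma$, then $X_1\cdots X_t=\Gamma$. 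The strategy is to show that for a non-central chief factor $M$ of $\Gamma=\langle g_1,\dots,g_r\rangle$ some set $[M,g_i]$ is ``relatively large,'' so that a bounded product of such sets covers $M$ outright; this replaces, wholesale, the twisted-commutator width bounds from~\cite{NN_DS07b} that you propose to invoke. Your plan never mentions this combinatorial mechanism, which is the actual engine of the proof being cited.

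There is also a substantive gap in what you dismiss as ``bookkeeping.'' The bound $f=O(r^6 d(\Gamma)^6)$ depends only on $r$ and $d(\Gamma)$, \emph{not} on the length of a chief series of~$\Gamma$. The layer-by-layer induction you sketch --- solve modulo a minimal normal subgroup $A$, lift, correct by an element of $[H,\Gamma]\cap A$ --- accumulates a correction at every layer and so, run naively, produces a bound that grows with the chief length; compressing this to a polynomial in $r$ and $d(\Gamma)$ alone is precisely where the hard work lies, and in~\cite{NN_DS12} that is what the product-decomposition technology is for. Relatedly, the claim that part~(1) ``becomes, after the reductions, a purely module-theoretic statement'' overreaches: the hypothesis $H\subseteq\Gamma_0$ does rule out $H$ itself being a non-abelian chief factor, but the generation hypothesis $H\langle y_1,\dots,y_r\rangle=\Gamma'\langle y_1,\dots,y_r\rangle=\Gamma$ still has to be exploited through the quotient $\Gamma/\Gamma_0$, which is semisimple-by-(soluble of derived length $\le 3$) and whose non-abelian composition factors very much enter the analysis. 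Your single-abelian-layer computation (that $W=\sum_i(y_i-1)H$ equals $[H,\Gamma]$ once the $y_i$ generate modulo $\mathrm{C}_\Gamma(H)$) is fine, but, as you yourself suspect, it is not where the difficulty is.
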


While the proof of Theorem~\ref{thm:main-finite} is rather involved,
the basic underlying idea is simple to sketch.  Suppose that
$\Gamma = \langle g_1, \dots, g_r \rangle$ is a finite group and $M$ a
non-central chief factor.  Then the set
$[M, g_i] = \{ [m,g_i] \mid m \in M \}$ must be `relatively large' for
at least one generator~$g_i$.  Hence $\prod_{i=1}^r [M,g_i]$ is
`relatively large'.  In order to transform this observation into a
rigorous proof one employs a combinatorial principle, discovered by
Timothy Gowers in the context of product-free sets of quasirandom
groups and adapted by Nikolay Nikolov and L\'aszl\'o Pyber to obtain
product decompositions in finite simple groups;
cf.~\cite{TG08,NN_LP11}.  Informally speaking, to show that a finite
group is equal to a product of some of its subsets, it suffices to
know that the cardinalities of these subsets are `sufficiently large'.
A precise statement of the result used in the proof of
Theorem~\ref{thm:main-finite} is the following.

\begin{theorem}[{\cite[Corollary~2.6]{LB_NN_LP08}}] 
  Let $\Gamma$ be a finite group, and let $\ell(\Gamma)$ denote the
  minimal dimension of a non-trivial $\mathbb{R}$-linear
  representation of $\Gamma$.

  If $X_1, \dots, X_t \subseteq \Gamma$, for $t \geq 3$, satisfy
  \[
  \prod_{i=1}^t \lvert X_i \rvert \geq \lvert \Gamma \rvert^t \,
  \ell(\Gamma)^{2-t},
  \]
  then $X_1 \cdots X_t = \{ x_1 \cdots x_t \mid x_i \in X_i \text{ for
  } 1 \leq i \leq t\} = \Gamma$.
\end{theorem}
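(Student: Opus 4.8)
The plan is to deduce this combinatorial statement from harmonic analysis on $\Gamma$, in the spirit of Gowers' work on quasirandom groups (cf.~\cite{TG08}). I identify each $X_i$ with its indicator function $1_{X_i} \colon \Gamma \to \mathbb{R}$ and write $(u \ast v)(z) = \sum_{y \in \Gamma} u(y)\, v(y^{-1}z)$ for convolution on $\Gamma$; then $(1_{X_1} \ast \cdots \ast 1_{X_t})(z)$ counts, with multiplicity, the ways of writing $z = x_1 \cdots x_t$ with $x_i \in X_i$. Consequently $X_1 \cdots X_t = \Gamma$ as soon as the function $1_{X_1} \ast \cdots \ast 1_{X_t}$ is strictly positive at every point of $\Gamma$, and it is this pointwise positivity that I would establish.

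If some $X_i$ is empty the hypothesis fails, and if some $X_i$ equals $\Gamma$ the conclusion is trivial; so assume that every $X_i$ is a proper non-empty subset. Setting $c_i = \lvert X_i \rvert / \lvert \Gamma \rvert$, I decompose $1_{X_i} = c_i \mathbf{1} + f_i$, where $\mathbf{1}$ is the constant function $1$ and $f_i$ has mean zero. Expanding the $t$-fold convolution and using that $\mathbf{1}$ is absorbing and central for convolution, $\mathbf{1} \ast h = h \ast \mathbf{1} = \bigl( \sum_{g \in \Gamma} h(g) \bigr)\mathbf{1}$, one sees that any term involving at least one factor $\mathbf{1}$ together with at least one factor $f_i$ vanishes, since each $f_i$ has vanishing sum. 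Only the all-$\mathbf{1}$ term and the all-$f_i$ term survive, so that
\[
1_{X_1} \ast \cdots \ast 1_{X_t} \;=\; \frac{\lvert X_1 \rvert \cdots \lvert X_t \rvert}{\lvert \Gamma \rvert}\, \mathbf{1} \;+\; f_1 \ast \cdots \ast f_t .
\]
It therefore suffices to prove the error estimate $\lVert f_1 \ast \cdots \ast f_t \rVert_\infty < \lvert X_1 \rvert \cdots \lvert X_t \rvert / \lvert \Gamma \rvert$.

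To estimate the error I would use Cauchy--Schwarz once, pointwise, to get $\lVert f_1 \ast \cdots \ast f_t \rVert_\infty \le \lVert f_1 \ast \cdots \ast f_{t-1} \rVert_2\, \lVert f_t \rVert_2$, and then apply $t-2$ times the \emph{mixing inequality}
\[
\lVert h \ast f \rVert_2 \;\le\; \sqrt{\lvert \Gamma \rvert / \ell(\Gamma)}\;\, \lVert h \rVert_2\, \lVert f \rVert_2 \qquad (f \text{ of mean zero})
\]
to strip the remaining factors off one at a time, arriving at $\lVert f_1 \ast \cdots \ast f_t \rVert_\infty \le \bigl( \lvert \Gamma \rvert / \ell(\Gamma) \bigr)^{(t-2)/2}\, \prod_{i=1}^{t} \lVert f_i \rVert_2$. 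Since $\lVert f_i \rVert_2^2 = \lvert X_i \rvert\bigl( 1 - \lvert X_i \rvert / \lvert \Gamma \rvert \bigr) < \lvert X_i \rvert$ for a proper subset, the right-hand side is strictly less than $\bigl( \lvert \Gamma \rvert / \ell(\Gamma) \bigr)^{(t-2)/2}\bigl( \lvert X_1 \rvert \cdots \lvert X_t \rvert \bigr)^{1/2}$, and an elementary rearrangement shows that this last quantity is $\le \lvert X_1 \rvert \cdots \lvert X_t \rvert / \lvert \Gamma \rvert$ exactly when $\prod_{i} \lvert X_i \rvert \ge \lvert \Gamma \rvert^{t}\, \ell(\Gamma)^{2-t}$ --- that is, precisely under the hypothesis. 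Chaining the inequalities yields the required strict bound. Note that the hypothesis is allowed to be an equality because the properness of the $X_i$ already supplies the needed slack, and that the assumption $t \ge 3$ is exactly what guarantees at least one mixing step.

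The one substantial ingredient, and the step I expect to be the main obstacle, is the mixing inequality. Its natural proof is non-commutative Fourier analysis over $\mathbb{R}$: right convolution by a mean-zero $f$ is an endomorphism of the left regular $\mathbb{R}[\Gamma]$-module that annihilates the constants and preserves each real-irreducible isotypic component; on a component $V$ its operator norm is at most the Hilbert--Schmidt norm of the corresponding Fourier coefficient of $f$, and the Plancherel identity for $\mathbb{R}[\Gamma]$ --- in which the weight of $V$ is its real dimension --- bounds this by $\sqrt{\lvert \Gamma \rvert / \dim_{\mathbb{R}} V}\; \lVert f \rVert_2 \le \sqrt{\lvert \Gamma \rvert / \ell(\Gamma)}\, \lVert f \rVert_2$ for nontrivial $V$. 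The delicate point is the bookkeeping of these real Plancherel weights: it is essential that the \emph{real} dimension $\ell(\Gamma)$ governs the estimate and not the minimal dimension of a nontrivial complex irreducible representation, which can be as small as $\ell(\Gamma)/2$ when the relevant representations are of complex or quaternionic type; the missing factor of two is recovered from a reality constraint on the Fourier coefficient that is invisible over $\mathbb{C}$. Granted the mixing inequality in this form, the rest of the argument sketched above is routine.
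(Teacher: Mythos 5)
The survey itself does not prove this result---it is stated as an imported black box, cited from Babai, Nikolov and Pyber \cite[Corollary~2.6]{LB_NN_LP08}---so there is no paper-internal argument to compare against. Judged on its own, your reconstruction is correct and is, in fact, precisely the Gowers-style argument that underlies the cited result, so it is the ``expected'' proof rather than a genuinely different route.

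The two steps that deserve scrutiny both check out. First, the splitting
\[
1_{X_1} \ast \cdots \ast 1_{X_t} = \frac{\lvert X_1\rvert \cdots \lvert X_t\rvert}{\lvert\Gamma\rvert}\,\mathbf{1} + f_1 \ast \cdots \ast f_t
\]
is valid: in any mixed term of the expansion there is an adjacent pair $\mathbf{1}\ast f_j$ or $f_j\ast\mathbf{1}$, and since $\mathbf{1}\ast f_j = f_j\ast\mathbf{1} = \bigl(\sum_g f_j(g)\bigr)\mathbf{1} = 0$, the whole term dies by associativity. Second, and this is the real content, the mixing inequality does hold with the \emph{real} parameter $\ell(\Gamma)$, not merely the complex minimal dimension. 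In the Wedderburn decomposition $\mathbb{R}[\Gamma] \cong \bigoplus_i M_{n_i}(D_i)$ with $D_i \in \{\mathbb{R},\mathbb{C},\mathbb{H}\}$, right convolution by $f$ acts on the $i$-th block as right multiplication by a Fourier coefficient $A_i \in M_{n_i}(D_i)$; its operator norm is at most $\lVert A_i\rVert_{HS}$, while the real Hilbert--Schmidt norm of the block is $d_i\lVert A_i\rVert_{HS}^2$ with $d_i = n_i\dim_{\mathbb{R}}D_i$ the real dimension of the irreducible. Plancherel $\lVert T_f\rVert_{HS}^2 = \lvert\Gamma\rvert\,\lVert f\rVert_2^2$ then gives $\lVert A_i\rVert_{HS}^2 \le \lvert\Gamma\rvert\,\lVert f\rVert_2^2/d_i$, and for mean-zero $f$ the trivial block drops out, leaving $\lVert h\ast f\rVert_2 \le \sqrt{\lvert\Gamma\rvert/\ell(\Gamma)}\,\lVert h\rVert_2\lVert f\rVert_2$. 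The factor $\dim_{\mathbb{R}}D_i$ is exactly the ``reality constraint'' you allude to, and it is what upgrades the denominator from the multiplicity $n_i$ to the real dimension $d_i$. The arithmetic at the end---$\lVert f_i\rVert_2^2 = \lvert X_i\rvert(1-\lvert X_i\rvert/\lvert\Gamma\rvert) < \lvert X_i\rvert$ supplying the strictness, and the rearrangement that recovers the stated hypothesis---is also correct. So the proposal is a sound, if slightly telegraphic, account of the standard proof.
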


A short, but informative summary of the proof of
Theorem~\ref{thm:main-finite}, based on product decompositions, can be
found in~\cite[\S10]{NN12}.

\bigskip

\noindent
\textbf{\S {\thesection}.2.} By standard compactness arguments,
Theorem~\ref{thm:main-finite} yields a corresponding result for normal
subgroups of finitely generated profinite groups.  For a profinite
group~$G$, let $d(G)$ denote the minimal number of topological
generators of $G$, write $G' = [G,G]$ for the abstract commutator
subgroup of $G$ and set
\begin{equation*}
  G_0 = \bigcap \{ T \trianglelefteq_\mathrm{o} G \mid  G/T \text{
    almost-simple} \}, 
\end{equation*}
where the intersection over an empty set is naturally interpreted
as~$G$.  As in the finite case, $G/G_0$ is semisimple-by-(soluble
of derived length at most~$3$).  For $X \subseteq G$ and
$f \in \mathbb{N}$ we write
$X^{*f} = \{ x_1 \cdots x_f \mid x_1,\dots,x_f \in X \}$ as before,
and the topological closure of $X$ in $G$ is denoted by
$\overline{X}$.

\begin{theorem}[Nikolov,
  Segal~\cite{NN_DS12}] \label{thm:main-profinite} Let $G$ be a
  profinite group and $\{y_1,\dots,y_r\} \subseteq G$ a symmetric
  subset.  Let $H \trianglelefteq_\mathrm{c} G$ be a closed normal
  subgroup.

  \textup{(1)} If $H \subseteq G_0$ and $H \overline{\langle
    y_1,\dots,y_r \rangle} = \overline{G' \langle y_1,\dots,y_r
    \rangle} = G$ then
  \[
  \langle [h,g] \mid h \in H, g \in G \rangle = \left\{ [h_1,y_1]
    \cdots [h_r,y_r] \mid h_1,\dots,h_r \in H \right\}^{*f},
  \]
  where $f = f(r,d(G)) = O(r^6 d(G)^6)$.

  \textup{(2)} If $y_1,\dots,y_r$ topologically generate $G$ then the
  conclusion in \textup{(1)} holds without assuming $H \subseteq G_0$
  and better bounds on~$f$.
\end{theorem}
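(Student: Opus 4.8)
The plan is to deduce the theorem from its finite counterpart, Theorem~\ref{thm:main-finite}, by the standard inverse-limit argument, writing $G = \varprojlim_N G/N$ with $N$ ranging over the open normal subgroups $N \trianglelefteq_\mathrm{o} G$. Fix the symmetric set $\{y_1,\dots,y_r\}$ and the closed normal subgroup $H \trianglelefteq_\mathrm{c} G$, and set
\[
P = \bigl\{\, [h_1,y_1] \cdots [h_r,y_r] \mid h_1,\dots,h_r \in H \,\bigr\} \subseteq G,
\]
so that the assertion to be proved is precisely $[H,G] = P^{*f}$, where $[H,G] = \langle [h,g] \mid h \in H, g \in G\rangle$ denotes the abstract subgroup. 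Since $H$ is closed it is compact, so $P$, the image of $H^{\,r}$ under a continuous map, is compact and hence closed in~$G$; therefore $P^{*f}$, the image of $P^{\,f}$ under iterated multiplication, is closed in~$G$ as well. The closedness of $P^{*f}$ is the device that will promote an equality ``modulo every~$N$'' to an honest equality in~$G$.

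First I would push the data into the finite quotients. Fix $N \trianglelefteq_\mathrm{o} G$, set $\Gamma = G/N$ and let $\pi \colon G \to \Gamma$ be the quotient map; then $\{\pi(y_1),\dots,\pi(y_r)\}$ is symmetric, $\pi(H) \trianglelefteq \Gamma$, and $d(\Gamma) \le d(G)$. The hypotheses descend cleanly. In case~(1): the $\pi$-preimage of any $T \trianglelefteq \Gamma$ with $\Gamma/T$ almost-simple is an open normal subgroup of~$G$ with almost-simple quotient, hence contains~$G_0$, so $\pi(G_0) \subseteq \Gamma_0$ and therefore $\pi(H) \subseteq \Gamma_0$; and since $X \subseteq \overline{X} \subseteq XN$ for every subset $X \subseteq G$ we have $\pi(\overline{X}) = \pi(X)$, so applying this to $\overline{\langle y_1,\dots,y_r\rangle}$ and to $\overline{G'\langle y_1,\dots,y_r\rangle}$ turns the relations $H\overline{\langle y_1,\dots,y_r\rangle} = \overline{G'\langle y_1,\dots,y_r\rangle} = G$ into $\pi(H)\langle\pi(y_1),\dots,\pi(y_r)\rangle = \Gamma'\langle\pi(y_1),\dots,\pi(y_r)\rangle = \Gamma$. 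In case~(2) the hypothesis that $y_1,\dots,y_r$ topologically generate~$G$ becomes $\Gamma = \langle\pi(y_1),\dots,\pi(y_r)\rangle$. Either way Theorem~\ref{thm:main-finite} applies to~$\Gamma$ and gives $[\pi(H),\Gamma] = \pi(P)^{*f_N}$ with $f_N = O(r^6 d(\Gamma)^6)$; because $1 \in \pi(P)$ and $\langle\pi(P)\rangle \subseteq [\pi(H),\Gamma]$, one may pad with identities to replace $f_N$ by a single $f = O(r^6 d(G)^6)$ independent of~$N$, so that $[\pi(H),\Gamma] = \pi(P)^{*f}$ for every $N \trianglelefteq_\mathrm{o} G$.

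Finally I would reassemble the inverse limit. For each~$N$ the image of $[H,G]$ under~$\pi$ is $[\pi(H),\Gamma]$ and the image of $P^{*f}$ is $\pi(P)^{*f}$, so the previous paragraph yields $[H,G]N = P^{*f}N$ for every $N \trianglelefteq_\mathrm{o} G$. Intersecting over all such~$N$, using $\overline{Y} = \bigcap_N YN$ for any subset $Y \subseteq G$ together with the closedness of $P^{*f}$, gives
\[
\overline{[H,G]} \;=\; \bigcap\nolimits_N [H,G]N \;=\; \bigcap\nolimits_N P^{*f}N \;=\; \overline{P^{*f}} \;=\; P^{*f}.
\]
On the other hand $P^{*f} \subseteq [H,G]$, since each factor $[h_i,y_i]$ lies in the subgroup $[H,G]$, while $[H,G] \subseteq \overline{[H,G]} = P^{*f}$; hence $[H,G] = P^{*f} = \overline{[H,G]}$, which in passing re-establishes that $[H,G]$ is closed in~$G$. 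I expect the only delicate part to be bookkeeping: checking that the hypotheses of Theorem~\ref{thm:main-finite} --- above all the containment $\pi(H) \subseteq \Gamma_0$ and the two ``product $= \Gamma$'' relations --- really do hold in every finite quotient, and securing a bound~$f$ uniform in~$N$. The quantitative content is supplied wholesale by the finite theorem, so nothing deeper is needed.
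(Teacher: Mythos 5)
Your proof is correct and takes essentially the same approach as the paper's: reduce to each finite quotient $G/N$ via Theorem~\ref{thm:main-finite}, observe that the right-hand side is compact (hence closed), and intersect over all open normal $N$. You spell out more explicitly than the paper does why the hypotheses descend to $G/N$ and how to make the bound $f$ uniform over $N$, but the underlying argument is identical.
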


\begin{proof}
  We indicate how to prove~(1).  The inclusion `$\supseteq$' is clear.
  The inclusion `$\subseteq$' holds modulo every open normal subgroup
  $N \trianglelefteq_\mathrm{o} G$, by Theorem~\ref{thm:main-finite}.
  Consider the set on the right-hand side, call it~$Y$.  Being the
  image of the compact space $H \times \dots \times H$, with $rf$
  factors, under a continuous map, the set $Y$ is closed in the
  Hausdorff space~$G$.  Hence
  \[
  \langle [h,g] \mid h \in H, g \in G \rangle \subseteq
  \bigcap\nolimits_{N \trianglelefteq_\mathrm{o} G} Y N = Y. \qedhere
  \]
\end{proof}

In particular, the theorem shows that, if $G$ is a finitely generated
profinite group and $H \trianglelefteq_\mathrm{c} G$ a closed normal
subgroup, then the abstract subgroup 
\[ 
[H,G] = \langle [h,g] \mid h \in H, g \in G \rangle
\]
is closed.  Thus $G'$ and more generally all terms $\gamma_i(G)$ of
the abstract lower central series of $G$ are closed; these
consequences were already established in~\cite{NN_DS07a}.
Theorem~\ref{thm:further-results-3}, stated in the introduction,
generalises these results to more general compact Hausdorff groups.

Furthermore, one obtains from Theorem~\ref{thm:main-profinite} the
following tool for studying abstract normal subgroups of a finitely
generated profinite group~$G$, reducing certain problems more or less
to the abelian profinite group~$G/G'$ or the profinite group~$G/G_0$
which is semisimple-by-(soluble of derived length at most~$3$).

\begin{corollary}[Nikolov, Segal~\cite{NN_DS12}] \label{cor:N=G} Let
  $G$ be a finitely generated profinite group and $N \trianglelefteq
  G$ a normal abstract subgroup.  If $N G' = N G_0 = G$ then $N = G$.
\end{corollary}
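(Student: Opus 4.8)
The plan is to exploit Theorem~\ref{thm:main-profinite}(1) with $H$ chosen to be a suitable closed normal subgroup contained in $G_0$, and to feed in topological generators coming from $N$. First I would pick $y_1, \dots, y_r \in N$ whose images topologically generate $G/G'$; this is possible because $NG' = G$ forces $N$ to surject onto the (abelian, finitely generated, hence procyclic-by-finite) profinite group $G/G'$, so finitely many elements suffice, and after adjoining inverses I may assume $\{y_1,\dots,y_r\}$ is symmetric. With this choice $\overline{G'\langle y_1,\dots,y_r\rangle} = G$ automatically, since the left-hand side is a closed subgroup containing both $G'$ and a set of generators modulo $G'$. The remaining hypothesis needed to invoke part~(1) is $H \overline{\langle y_1,\dots,y_r\rangle} = G$ for the chosen $H \subseteq G_0$; I will arrange this below.

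Next I would take $H = G_0$ itself (which is closed and normal in $G$, being an intersection of open normal subgroups). The condition $NG_0 = G$ from the hypothesis gives exactly $G_0 \overline{\langle y_1, \dots, y_r\rangle} = G$: indeed $\langle y_1,\dots,y_r\rangle \subseteq N$, so $\overline{\langle y_1,\dots,y_r\rangle}$ maps onto $G/G_0$ — wait, this needs care, since I only know $y_i$ generate $G$ modulo $G'$, not modulo $G_0$. So I should instead enlarge the generating set: choose $y_1, \dots, y_r \in N$ so that their images topologically generate $G/G'$ \emph{and} so that their images topologically generate $G/G_0$; since $N$ surjects onto each of these finitely generated profinite quotients, and I may take the union of two such finite symmetric sets, this is harmless. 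Then both $\overline{G'\langle y_1,\dots,y_r\rangle} = G$ and $G_0\overline{\langle y_1,\dots,y_r\rangle} = G$ hold, and Theorem~\ref{thm:main-profinite}(1) applies with $H = G_0$: every element of the abstract subgroup $[G_0, G]$ is a product of $f$ factors of the form $[h_1, y_1]\cdots[h_r,y_r]$ with $h_i \in G_0$. Since each $y_i \in N$ and $N \trianglelefteq G$ is abstractly normal, each commutator $[h_i, y_i] = h_i^{-1}(y_i^{-1} h_i y_i) \in N$ — using $h_i \in G_0 \subseteq G$ is not enough; I need $[h_i,y_i] \in N$, which follows because $y_i \in N$ and $N$ is normal, so $y_i^{-1} h_i y_i \cdot h_i^{-1}$... actually $[h_i,y_i] = h_i^{-1}y_i^{-1}h_iy_i$; since $N \trianglelefteq G$ and $y_i \in N$, we get $h_i^{-1}y_i^{-1}h_i \in N$, hence $[h_i,y_i] \in N$. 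Therefore $[G_0, G] \subseteq N$.

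Having shown $[G_0,G] \subseteq N$, I would pass to the quotient $\overline{G} = G/[G_0,G]$, in which the image $\overline{G_0}$ of $G_0$ is central. The image $\overline{N}$ of $N$ satisfies $\overline{N}\,\overline{G_0} = \overline{G}$ and $\overline{N}\,\overline{G}' = \overline{G}$, and now $\overline{G_0}$ is central, so $\overline{G_0} \subseteq$ the center, and $\overline{N}$ being abstractly normal with $\overline{N}\overline{G_0} = \overline{G}$ — combined with the fact that $\overline{G}/\overline{G_0}$ is semisimple-by-(soluble of derived length $\le 3$) while $\overline{G_0}$ is central — I would reduce to showing that a normal abstract subgroup supplementing both the derived subgroup and a central closed subgroup must be everything. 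Concretely: from $\overline N \overline G' = \overline G$ one obtains, by the standard argument that $[\overline G, \overline G] = [\overline N, \overline G]$ when $\overline N$ is normal and $\overline N \overline G' = \overline G$, that $\overline G' \subseteq \overline N$; then $\overline N \supseteq \overline G'$ and $\overline N \overline G_0 = \overline G$ with $\overline G/\overline G'$ abelian forces... here one uses Theorem~\ref{thm:q-th-power} or Corollary~\ref{cor:SaWiMaZe}-type input on $G/G_0$ to kill the semisimple part, and the structure of finitely generated abelian profinite groups on $G/G'$, to conclude $\overline N = \overline G$, i.e. $N[G_0,G] = G$; since $[G_0,G] \subseteq N$ this gives $N = G$.

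\textbf{The main obstacle.} The genuinely delicate step is the final reduction modulo $[G_0,G]$: showing that the two supplement conditions $NG' = G$ and $NG_0 = G$ together force $N = G$ once $[G_0,G] \subseteq N$. This requires handling the semisimple-by-soluble quotient $G/G_0$ and the abelian quotient $G/G'$ simultaneously, and it is here that one must invoke the deeper facts — that verbal subgroups like $G^q$ and $[G,G]$ are open (Theorems~\ref{thm:Serre} and~\ref{thm:q-th-power}) and that semisimple profinite images are strongly complete (Theorem~\ref{thm:SaWiMaZe}) — to rule out a proper abstract normal supplement. The commutator-width input from Theorem~\ref{thm:main-profinite} does the heavy lifting of producing $[G_0,G] \subseteq N$, but translating "$N$ supplements $G'$ and $G_0$" into "$N = G$" in the quotient is where the argument is least mechanical.
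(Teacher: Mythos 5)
Your proposal gets the substantive step right: choosing a finite symmetric set $y_1,\dots,y_r \in N$ whose images topologically generate both $G/G'$ and $G/G_0$ (possible since $NG'=NG_0=G$), applying Theorem~\ref{thm:main-profinite}(1) with $H = G_0$, and observing that each factor $[h_i,y_i]$ lies in $N$ because $y_i \in N$ and $N$ is abstractly normal, yields $[G_0,G]\subseteq N$. This is exactly what the paper does.

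The gap is in the finish, which you greatly over-complicate and do not actually complete. Once $[G_0,G]\subseteq N$ is in hand, the paper concludes in one elementary line: since $NG_0 = G$ one has $G' = [G,G] = [NG_0,G]$, and for $n\in N$, $h\in G_0$, $g\in G$ the identity $[nh,g] = [n,g]^h\,[h,g]$ shows $[NG_0,G]\subseteq N$, because $[n,g]^h\in N$ by normality of $N$ and $[h,g]\in[G_0,G]\subseteq N$. Hence $G' \subseteq N$ and $G = NG' = N$. Nothing deeper is needed. Your sketch instead passes to $\overline G = G/[G_0,G]$ and invokes ``the standard argument that $[\overline G,\overline G]=[\overline N,\overline G]$ when $\overline N$ is normal and $\overline N\,\overline G' = \overline G$'' to get $\overline G'\subseteq\overline N$; that inference is false in general ($NG'=G$ only says $N$ surjects onto $G/G'$, and the iteration $G'\subseteq N\gamma_k(G)$ closes up only under nilpotency hypotheses). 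The correct identity to use is $[\overline G,\overline G] = [\overline N\,\overline G_0,\overline G] = [\overline N,\overline G]$, exploiting the supplement $\overline N\,\overline G_0 = \overline G$ and the centrality of $\overline G_0$ in $\overline G$ — which is just the commutator calculation above in disguise. Your subsequent appeals to $G^q$, Corollary~\ref{cor:SaWiMaZe}, and strong completeness of semisimple images are unnecessary, and your final paragraph concedes you do not have a complete argument for this step.
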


\begin{proof}
  Suppose that $N G' = N G_0 = G$, and let $d = d(G)$ be the minimal
  number of topological generators of~$G$.  Then there exist $y_1,
  \dots, y_{2d} \in N$ such that
  \[
  G_0 \overline{\langle y_1, \dots, y_{2d} \rangle} = \overline{G'
    \langle y_1, \dots, y_{2d} \rangle} = G.
  \]
  Applying Theorem~\ref{thm:main-profinite}, with $H = G_0$, we obtain
  \[
  [G_0,G] \subseteq \langle [G_0,y_i] \cup [G_0,y_i^{\, -1}] \mid 1
  \leq i \leq 2d \rangle \subseteq N,
  \]
  hence $G = NG' = N [NG_0,G] = N$.
\end{proof}

Using Corollaries~\ref{cor:N=G} and~\ref{cor:SaWiMaZe}, it is not
difficult to derive Theorem~\ref{thm:Serre}.

\begin{proof}[Proof of Theorem~\ref{thm:Serre}]
  Let $H \leq_\mathrm{f} G$ be a finite-index subgroup of the finitely
  generated profinite group~$G$.  Then its core $N = \bigcap_{g \in G}
  H^g \trianglelefteq_\mathrm{f} G$ is contained in~$H$, and it
  suffices to prove that $N$ is open in~$G$.  The topological closure
  $\overline{N}$ is open in~$G$; in particular, $\overline{N}$ is a
  finitely generated profinite group and without loss of generality we
  may assume that $\overline{N} = G$.

  Assume for a contradiction that $N \lneqq G$.  Using
  Corollary~\ref{cor:N=G}, we deduce that
  \begin{equation} \label{equ:alternative}
    N G' \lneqq G \qquad \text{or} \qquad N G_0 \lneqq G.
  \end{equation}

  Setting $q = \lvert G : N \rvert$, we know that the abstract
  subgroup $G^q = \langle g^q \mid g \in G \rangle$ generated by all
  $q$th powers is contained in~$N$.  By
  Theorem~\ref{thm:main-profinite}, the abstract commutator subgroup
  $G' = [G,G]$ is closed, thus the subgroup $G'G^q$ is closed in~$G$.
  Hence $G/ G'G^q$, being a finitely generated abelian profinite group
  of finite exponent, is finite and discrete.  As $NG'/G'G^q$ is dense
  in $G/G'G^q$, we deduce that $NG' = NG'G^q = G$.

  It suffices to prove that $NG_0 = G$ in order to obtain a
  contradiction to~\eqref{equ:alternative}.  Factoring out by $G_0$,
  we may assume without loss of generality that~$G_0 = 1$.  Then $G$
  has a semisimple subgroup $T \trianglelefteq_\mathrm{c} G$ such that
  $G/T$ is soluble.  From $G = NG'$ we see that $G/NT$ is perfect and
  soluble, and we deduce that
  \begin{equation} \label{equ:NT=G} 
     NT = G.
  \end{equation}
  Corollary~\ref{cor:SaWiMaZe} shows that $T^q \leq_\mathrm{c} T$.
  Factoring out by $T^q$ (which is contained in~$N$) we may assume
  without loss of generality that $T^q = 1$.  The definition of $G_0$
  shows that $T$ is a product of non-abelian finite simple groups,
  each normal in $G$, of exponent dividing~$q$.  Using the
  classification of finite simple groups, one sees that the finite
  simple factors of $T$ have uniformly bounded order.  Thus
  $G/\mathrm{C}_G(T)$ is finite.  As $T \cap \mathrm{C}_G(T) = 1$, we
  conclude that $T$ is finite, thus $T \cap N \leq_\mathrm{c} G$.
  This shows that
  \[
  T = [T,G] = [T,\overline{N}] \leq \overline{[T,N]} \leq T \cap N,
  \]
  and \eqref{equ:NT=G} gives $N = NT = G$.
\end{proof}

\bigskip

\noindent
\textbf{\S {\thesection}.3.} The methods developed in~\cite{NN_DS12}
lead to new consequences for abstract quotients of finitely generated
profinite groups and, more generally, compact Hausdorff topological
groups.  In the introduction we stated three such results:
Theorems~\ref{thm:further-results-1}, \ref{thm:further-results-2} and
\ref{thm:further-results-3}.

We finish by indicating how Corollary~\ref{cor:N=G} can be used to see
that, for profinite groups, the assertion in
Theorem~\ref{thm:further-results-1} reduces to the following special
case.

\begin{theorem} \label{thm:semisimple-case}
  A finitely generated semisimple profinite group does not have
  countably infinite abstract images.
\end{theorem}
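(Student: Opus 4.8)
The plan is to argue by contradiction. Suppose the finitely generated semisimple profinite group $G = \prod_{i\in\mathbb{N}} S_i$, with each $S_i$ non-abelian finite simple, has an abstract normal subgroup $N$ with $G/N$ countably infinite. Since $G$ is finitely generated, only finitely many of the factors $S_i$ lie in any fixed isomorphism class. Write $\pi_i\colon G\to S_i$ for the $i$-th coordinate projection and $e_i(t)\in G$ for the element with $i$-th coordinate $t\in S_i$ and all other coordinates trivial. I would run the argument in three stages: reduce to the case that $N$ is dense in $G$, then to the case that $N$ contains the restricted direct product $\bigoplus_i S_i$, and finally treat the resulting abstract quotient of the ``gap group'' $\prod_i S_i/\bigoplus_i S_i$.

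Stage one. The closure $\overline{N}$ is a closed normal subgroup of $G$; since $G$ is semisimple, each $\pi_i(\overline{N})$ is $1$ or $S_i$, and the normal-closure computation of stage two, applied to $\overline{N}$, shows in fact that $\overline{N} = \prod_{i\in A} S_i$ with $A = \{\,i : \pi_i(\overline{N}) = S_i\,\}$. If $\mathbb{N}\setminus A$ were infinite, then $G/\overline{N}\cong\prod_{i\notin A} S_i$ would be an infinite profinite group, hence uncountable, contradicting that it is a quotient of the countable group $G/N$. Thus $\overline{N}$ is open in $G$, and replacing $G$ by $\overline{N}$ — which is again finitely generated and semisimple, with $N$ normal in it, dense, and $\overline{N}/N$ still countably infinite — I may assume that $N$ is dense in $G$.

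Stage two. Density of $N$ forces $\pi_i(N) = S_i$ for every $i$, so there is $n\in N$ with $\pi_i(n) = s\neq 1$. Then $[n,e_i(t)] = e_i([s,t])\in N$ for all $t\in S_i$, and as $N$ is normal it contains the $G$-normal closure of $\{\,e_i([s,t]) : t\in S_i\,\}$, which is $e_i(R)$ for $R$ the normal closure of $\{\,[s,t] : t\in S_i\,\}$ in $S_i$. Since $s$ is non-central, $R\neq 1$, so $R = S_i$ by simplicity; hence $e_i(S_i)\subseteq N$ for every $i$, and therefore $\bigoplus_i S_i\subseteq N$. Consequently $G/N$ is an abstract quotient of $Q := \prod_i S_i/\bigoplus_i S_i$, and the theorem reduces to the statement that $Q$ admits no countably infinite abstract quotient.

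Stage three is the crux and the principal obstacle: it requires controlling all abstract normal subgroups of a Cartesian product of finite simple groups, which is genuinely delicate, since such a product does have proper non-closed normal subgroups, wedged between the restricted and unrestricted products over a suitable index set. The route I would take is to observe that, collecting the finitely many isomorphism types, $\prod_i S_i$ is a finite direct product of infinite powers of individual finite simple groups, and then to invoke the theorem of Cornulier that an infinite power of a finite perfect group is \emph{strongly bounded}: Cayley bounded (every generating set has bounded word-length) and of uncountable cofinality (not the union of a countable increasing chain of proper subgroups). Strong boundedness passes to finite direct products and to quotients, hence to $Q$ and to $G/N$. But a strongly bounded group cannot be countably infinite: if it is not finitely generated it is the union of a countable increasing chain of proper finitely generated subgroups, violating uncountable cofinality; and if it is finitely generated, Cayley boundedness exhibits it as a bounded-length product set of a finite generating set, hence finite. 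This contradiction would complete the proof. A treatment avoiding Cornulier's result would instead determine the normal subgroup lattice of $\prod_i S_i$ directly, in the spirit of the elementary arguments of stages one and two; that is the technical heart, which we do not pursue here.
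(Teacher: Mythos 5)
Your stages one and two give a clean, elementary reduction: $\overline{N}$ is open, one may pass to the case that $N$ is dense, and then the commutator computation shows that $N$ contains the restricted direct product $\bigoplus_i S_i$, so $G/N$ factors through $Q = \prod_i S_i / \bigoplus_i S_i$. Stage three, however — which you correctly single out as the crux — contains a material error. You claim that, ``collecting the finitely many isomorphism types,'' $\prod_i S_i$ is a finite direct product of infinite powers of individual finite simple groups. For a finitely generated semisimple $G$ this is exactly backwards: each isomorphism type occurs among the $S_i$ only \emph{finitely} often (a $d$-generator group cannot surject onto $T^n$ for $T$ non-abelian finite simple and $n$ unbounded), so when $G$ is infinite there are \emph{infinitely many} distinct isomorphism types, each with finite multiplicity, and no infinite power $T^I$ of a fixed finite perfect group $T$ ever appears. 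Cornulier's theorem on the strong boundedness of such powers therefore does not apply to the group you have in hand.

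What your stage three actually requires is a strong-boundedness (Bergman-type) theorem for Cartesian products of \emph{varying} non-abelian finite simple groups with $\lvert S_i \rvert \to \infty$. Proving that demands uniformity across different simple groups — precisely where the Liebeck--Shalev bounds on the diameters of non-abelian finite simple groups enter — and is in substance the content of Nikolov and Segal's description of the maximal abstract normal subgroups of a strongly complete semisimple profinite group, namely \cite[Theorem~5.12]{NN_DS12}, which is exactly what the present paper cites for this theorem. So your stage three does not sidestep the hard part; it misidentifies the structure of $G$ and appeals to a theorem that does not cover the case at issue, while presupposing the uniform diameter input it was meant to avoid.
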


Nikolov and Segal prove Theorem~\ref{thm:semisimple-case} via a
complete description of the maximal normal abstract subgroups of a
strongly complete semisimple profinite group; see
\cite[Theorem~5.12]{NN_DS12}.  Their argument relies, among other
things, on work of Martin Liebeck and Aner Shalev~\cite{ML_AS01} on
the diameters of non-abelian finite simple groups.

\begin{proof}[Proof of `Theorem~\ref{thm:semisimple-case} implies
  Theorem~\ref{thm:further-results-1} for profinite groups']
  For a contradiction, assume that $\Gamma = G/N$ is an infinite
  finitely generated abstract image of a profinite group~$G$.
  Replacing $G$ by the closed subgroup generated by any finite set
  mapping onto a generating set of $\Gamma$, we may assume that $G$ is
  topologically finitely generated, hence strongly complete by
  Theorem~\ref{thm:Serre}.  Let
  $\Gamma_1 = \bigcap_{\Delta \trianglelefteq_\mathrm{f} \Gamma}
  \Delta$.
  Then $\Gamma / \Gamma_1$ is a countable residually finite image
  of~$G$, and Corollary~\ref{cor:no-inf-res-fin} shows that
  $\lvert \Gamma : \Gamma_1 \rvert < \infty$.  Replacing $\Gamma$ by
  $\Gamma_1$ and $G$ by the preimage of $\Gamma_1$ in~$G$, we may
  assume that $\Gamma$ has no non-trivial finite images.

  We apply Corollary~\ref{cor:N=G} to $G$ and the normal abstract
  subgroup $N \trianglelefteq G$ with $G/N = \Gamma$.  Since $\Gamma$
  is finitely generated and has no non-trivial finite images, we
  conclude that $\Gamma' = \Gamma$ and hence $N G' = G$.  Thus $N G_0
  \lneqq G$.  Consequently, $G/G_0$ maps onto the infinite finitely
  generated perfect quotient $\Gamma_0 = G/NG_0$ of~$\Gamma$.  We may
  assume that $G_0 =1$ so that $G$ is semisimple-by-soluble.  Since
  soluble groups do not map onto perfect groups, we may assume that
  $G$ is semisimple.
\end{proof}

\section{Abstract versus continuous cohomology} \label{sec:coho}

In this section we interpret Theorem~\ref{thm:Serre} in terms of
cohomology groups and indicate some work in preparation; for more
details we refer to a forthcoming joint paper~\cite{YB_AJ_BK} with
Yiftach Barnea and Jaikin-Zapirain.

\bigskip

\noindent
\textbf{\S\thesection.1.}  Let $\Gamma$ be a dense abstract subgroup
of a profinite group~$G$, and let $M$ be a continuous finite
$G$-module.  For $i \in \mathbb{N}_0$, the $i$-dimensional continuous
cohomology group $\mathrm{H}^i_\mathrm{cont}(G,M)$ can be defined as
the quotient $\mathrm{Z}^i_\mathrm{cont}(G,M) /
\mathrm{B}^i_\mathrm{cont}(G,M)$ of continuous $i$-cocycles modulo
continuous $i$-coboundaries with values in~$M$.  Alternatively, the
continuous cohomology can be described via the direct limit
\[
\mathrm{H}_\mathrm{cont}^i(G,M) = \varinjlim_{U
  \trianglelefteq_\mathrm{o} G} \mathrm{H}^i(G/U,M^U),
\]
where $U$ runs over all open normal subgroups, $M^U$ denotes the
submodule of invariants under~$U$, and $\mathrm{H}^i(G/U,M^U)$ denotes
the ordinary cohomology of the finite group~$G/U$.  Restriction
provides natural maps
\begin{equation} \label{equ:restr} \mathrm{H}^i_\mathrm{cont}(G,M)
  \rightarrow \mathrm{H}^i(\Gamma,M), \quad i \in \mathbb{N}_0.
\end{equation}
Here  $\mathrm{H}^i(\Gamma,M)$ is the ordinary cohomology, or
equivalently the continuous cohomology of $\Gamma$ equipped with the
discrete topology.

In fact, we concentrate on the case $\Gamma = G$, regarded as an
abstract group, and we write $\mathrm{H}^i_\mathrm{disc}(G,M) =
\mathrm{H}^i(\Gamma,M)$ for the cohomology of the abstract group~$G$.
Nikolov and Segal's solution to Serre's problem can be reformulated as
follows.

\begin{theorem}[Nikolov, Segal] \label{thm:Nik-Seg-coho}
  Let $G$ be a finitely generated profinite group and $M$ a continuous
  finite $G$-module.  Then $\mathrm{H}^1_\mathrm{cont}(G,M)
  \rightarrow \mathrm{H}^1_\mathrm{disc}(G,M)$ is a bijection, and
  $\mathrm{H}^2_\mathrm{cont}(G,M) \rightarrow
  \mathrm{H}^2_\mathrm{disc}(G,M)$ is injective.
\end{theorem}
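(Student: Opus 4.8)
The plan is to translate the cohomological statement into the language of group extensions and $1$-cocycles, and then invoke Theorem~\ref{thm:Serre} (equivalently, strong completeness of~$G$) to control the difference between the abstract and continuous theories. For the $\mathrm{H}^1$ statement, recall that $\mathrm{H}^1(G,M)$ classifies $M$-conjugacy classes of complements to $M$ in the semidirect product $M \rtimes G$, or more concretely that a $1$-cocycle is a crossed homomorphism $\varphi\colon G \to M$ satisfying $\varphi(gh) = \varphi(g)^h + \varphi(h)$. First I would fix a crossed homomorphism $\varphi\colon G \to M$, with $M$ finite and discrete, and examine the subgroup $K = \{\, g \in G \mid \varphi(g) = 0 \text{ and } g \text{ acts trivially on } M \,\}$; more precisely I would use that the kernel of the action $G \to \mathrm{Aut}(M)$ is open (as $M$ is a continuous module), so after passing to that open normal subgroup $G_1$ the cocycle restricts to an honest homomorphism $G_1 \to M$ whose kernel is an abstract subgroup of finite index. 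By Theorem~\ref{thm:Serre} this kernel is open, hence $\varphi$ is continuous on $G_1$, and a short argument with coset representatives promotes this to continuity of $\varphi$ on all of~$G$. This shows every class in $\mathrm{H}^1_\mathrm{disc}(G,M)$ comes from a continuous cocycle, i.e.\ surjectivity; injectivity is easier, since an abstract coboundary $\varphi(g) = m^g - m$ is automatically continuous, so the restriction map on $\mathrm{H}^1$ has trivial kernel as well.

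For the $\mathrm{H}^2$ injectivity, I would use the interpretation of $\mathrm{H}^2(G,M)$ as equivalence classes of extensions
\[
1 \longrightarrow M \longrightarrow E \longrightarrow G \longrightarrow 1 .
\]
A class in the kernel of $\mathrm{H}^2_\mathrm{cont}(G,M) \to \mathrm{H}^2_\mathrm{disc}(G,M)$ corresponds to an extension $E$ that is \emph{abstractly} split: there is an abstract section $s\colon G \to E$ which is a group homomorphism, but a priori not continuous. The goal is to produce a continuous section, which would show the class is already trivial in $\mathrm{H}^2_\mathrm{cont}$. The key point is that $E$ is itself (the abstract group underlying) a profinite group: it is a finite extension of the profinite group $M$ by... no — rather, $E$ fits in an extension of the finitely generated profinite group $G$ by the finite group $M$, so $E$ is again a finitely generated profinite group, hence strongly complete by Theorem~\ref{thm:Serre}. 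Now the image $s(G) \leq E$ is an abstract subgroup of finite index $\lvert M \rvert$ in~$E$; by strong completeness $s(G)$ is open in~$E$, hence closed, hence a continuous finite-index subgroup mapping isomorphically onto $G$. Its inverse is a continuous section, so the extension splits continuously, and the class vanishes in $\mathrm{H}^2_\mathrm{cont}(G,M)$.

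I expect the main obstacle to lie in the bookkeeping around \emph{non}-split extensions in the $\mathrm{H}^2$ argument: one must be careful that the abstract section $s$ genuinely exists when the class lies in the kernel of the restriction map (this is exactly what "trivial in $\mathrm{H}^2_\mathrm{disc}$" means, but one should phrase it so that the section is a homomorphism rather than merely a set-theoretic splitting of a cocycle), and that the resulting identification of $\mathrm{H}^2_\mathrm{cont}(G,M)$ with \emph{continuous} extension classes is legitimate — this uses that $G$ is profinite and $M$ finite, so that continuous cohomology really does compute topological extensions, which is standard but worth citing. A secondary technical point, in the $\mathrm{H}^1$ argument, is the promotion of continuity from the open subgroup $G_1$ to all of~$G$: since $G_1$ is open, $G$ is covered by finitely many cosets, and a crossed homomorphism is determined on a coset $G_1 g_0$ by its values on $G_1$ together with $\varphi(g_0)$ via the cocycle identity, so continuity is inherited coset by coset; one just needs to note there are only finitely many cosets and $M$ is discrete. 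Everything else is routine once strong completeness is in hand, which is the whole content being exploited.
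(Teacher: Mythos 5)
Your argument is correct. The paper does not actually spell out a proof --- it only remarks that Theorem~\ref{thm:Nik-Seg-coho} is a reformulation of Theorem~\ref{thm:Serre} --- and what you write is precisely the standard translation: for $\mathrm{H}^1$, strong completeness of $G$ forces every abstract crossed homomorphism $G \to M$ to be continuous (pass to the open normal subgroup acting trivially on $M$, where the cocycle becomes a homomorphism with finite-index, hence open, kernel; then spread continuity over the finitely many cosets using the cocycle identity), and since abstract coboundaries are automatically continuous this gives $\mathrm{H}^1_\mathrm{cont}=\mathrm{H}^1_\mathrm{disc}$; for $\mathrm{H}^2$, the extension group $E$ is itself a finitely generated profinite group (it is a topological extension of $G$ by the finite module $M$, so lift topological generators of $G$ and adjoin $M$), so strong completeness of $E$ --- not merely of $G$ --- makes the image of an abstract splitting an open subgroup of index $\lvert M\rvert$, and the compact-to-Hausdorff argument turns the bijective projection $s(G)\to G$ into a homeomorphism whose inverse is the required continuous section.
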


It is natural to ask for analogous results in higher dimensions, but
the situation seems to become rather complicated already in dimensions
$2$ and~$3$.  Currently, very little is known, even at a conjectural
level.  Some basic results, regarding pro-$p$ groups, can be found in
\cite{FKRS08}; see also the references therein.

\bigskip

\noindent
\textbf{\S\thesection.2.}  In~\cite[I.2.6]{JPS97}, Serre introduced a
series of equivalent conditions to investigate the
maps~\eqref{equ:restr}, in a slightly more general setting.  For our purpose
the following formulation is convenient: for $n \in \mathbb{N}_0$ and
$p$ a prime, we define the property
\begin{description}
\item[$\mathsf{E}_n(p)$] for every continuous finite $G$-module $M$ of
  $p$-power cardinality, the natural map
  $\mathrm{H}_\mathrm{cont}^i(G,M) \to
  \mathrm{H}^i_\mathrm{disc}(G,M)$ is bijective for $0 \leq i \leq n$.
\end{description}
We say that $G$ satisfies $\mathsf{E}_n$ if $\mathsf{E}_n(p)$ holds
for all primes~$p$.  Since cohomology `commutes' with taking direct
sums in the coefficients, $G$ satisfies $\mathsf{E}_n$ if and only if,
for every continuous finite $G$-module~$M$, the maps~\eqref{equ:restr}
are bijections for $0 \leq i \leq n$.  We say that $G$ is
\emph{cohomologically $p$-good} if $G$ satisfies $\mathsf{E}_n(p)$ for
all~$n \in \mathbb{N}_0$.  The group $G$ is called
\emph{cohomologically good} if it is cohomologically $p$-good for all
primes~$p$; cf.~\cite[I.2.6]{JPS97}.

The following proposition from~\cite{YB_AJ_BK} provides a useful tool
for showing that a group has property~$\mathsf{E}_n(p)$.

\begin{proposition} \label{pro:est-closed} A profinite group $H$
  satisfies $\mathsf{E}_n(p)$ if there is an abstract short exact
  sequence
  \[
  1 \longrightarrow N \longrightarrow H \longrightarrow G
  \longrightarrow 1,
  \]
  where $N$ and $G$ are finitely generated profinite groups
  satisfying $\mathsf{E}_n(p)$.
\end{proposition}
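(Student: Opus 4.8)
The plan is to compare the Lyndon--Hochschild--Serre spectral sequences of the extension $1 \to N \to H \to G \to 1$, computed once with continuous cochains and once with arbitrary cochains. A preliminary reduction, which relies on Theorem~\ref{thm:Serre}, allows us to assume that $N$ is closed in~$H$, so that $H/N \cong G$ as topological groups and the continuous Lyndon--Hochschild--Serre spectral sequence is available. We may then consider the continuous spectral sequence
\[
E_2^{p,q} = \mathrm{H}^p_{\mathrm{cont}}(G, \mathrm{H}^q_{\mathrm{cont}}(N,M)) \;\Longrightarrow\; \mathrm{H}^{p+q}_{\mathrm{cont}}(H,M)
\]
and the abstract spectral sequence
\[
\bar E_2^{p,q} = \mathrm{H}^p_{\mathrm{disc}}(G, \mathrm{H}^q_{\mathrm{disc}}(N,M)) \;\Longrightarrow\; \mathrm{H}^{p+q}_{\mathrm{disc}}(H,M);
\]
the inclusions of continuous cochains into all cochains induce a morphism of spectral sequences from the former to the latter, compatible with the comparison maps on the abutments.

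On the $E_2$-page the map $E_2^{p,q}\to\bar E_2^{p,q}$ factors through $\mathrm{H}^p_{\mathrm{disc}}(G, \mathrm{H}^q_{\mathrm{cont}}(N,M))$. For $q\le n$ the coefficient comparison $\mathrm{H}^q_{\mathrm{cont}}(N,M)\to\mathrm{H}^q_{\mathrm{disc}}(N,M)$ is an isomorphism of $G$-modules by property $\mathsf{E}_n(p)$ for~$N$; and for $p\le n$ the comparison map for~$G$ with these coefficients is an isomorphism by $\mathsf{E}_n(p)$ for~$G$, once one knows that the $G$-module $\mathrm{H}^q_{\mathrm{cont}}(N,M)$ is finite. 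This finiteness is where the hypothesis that $N$ is finitely generated profinite is used: it is immediate for $q\le1$ and is a further point to verify for larger~$q$. Granting it, $E_2^{p,q}\to\bar E_2^{p,q}$ is an isomorphism whenever $p\le n$ and $q\le n$, in particular whenever $p+q\le n$.

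Finally one runs the standard comparison argument for first-quadrant cohomological spectral sequences to conclude that $\mathrm{H}^i_{\mathrm{cont}}(H,M)\to\mathrm{H}^i_{\mathrm{disc}}(H,M)$ is bijective for $i\le n$. The one piece not yet under control is the pair of edge terms $\mathrm{H}^{n+1}_{\mathrm{cont}}(G,M^N)$ and $\mathrm{H}^{n+1}_{\mathrm{disc}}(G,M^N)$, which receive the differentials issuing from total degree~$n$; a diagram chase shows that as soon as the comparison map between these two groups is injective, $E_r^{p,q}\to\bar E_r^{p,q}$ remains an isomorphism for every~$r$ and every $p+q\le n$, which yields the claim. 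I expect this injectivity in degree $n+1$ to be the main obstacle: for $n=1$ it is exactly the second assertion of Theorem~\ref{thm:Nik-Seg-coho}, valid for every finitely generated profinite group, whereas for general~$n$ one has to build it into an induction on~$n$ --- equivalently, carry along a monomorphism in degree $n+1$ as part of $\mathsf{E}_n(p)$ --- once more using that $G$ is finitely generated profinite.
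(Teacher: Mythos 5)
The paper itself does not prove Proposition~\ref{pro:est-closed}; it only states it, deferring the argument to the forthcoming paper~\cite{YB_AJ_BK}. So I can only assess your sketch on its own merits. The Lyndon--Hochschild--Serre comparison is indeed the natural route (it is the profinite analogue of Serre's exercise in \cite[I.2.6]{JPS97} on goodness passing through group extensions), and your preliminary reduction is sound and worth spelling out: since $N$ is finitely generated profinite, it is strongly complete by Theorem~\ref{thm:Serre}, so every finite-index subgroup of $N$ is open; for any open $U \trianglelefteq_\mathrm{o} H$ the preimage in $N$ has finite index, hence is open, so the inclusion $N \hookrightarrow H$ is continuous and therefore a closed embedding by compactness, and a similar argument using strong completeness of $G$ shows that the abstract bijection $H/N \to G$ is a homeomorphism. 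Your worry about injectivity in degree $n+1$ is in fact unfounded: from surjectivity (hence bijectivity) of the comparison maps in degrees $\le n$ for all finite coefficient modules, injectivity in degree $n+1$ follows by a dimension shift through a finite coinduced module $M^{*} = \mathrm{Coind}_{1}^{G/U}(M)$, so this does not need to be carried along as an extra hypothesis.

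The genuine gap is the finiteness of $\mathrm{H}^q_{\mathrm{cont}}(N,M)$ for $2 \le q \le n$, and I think you understate it by calling it ``a further point to verify''. Finite generation of $N$ gives this only for $q \le 1$; already for $q = 2$ finiteness of $\mathrm{H}^2_{\mathrm{cont}}(N,\mathbb{F}_p)$ is equivalent (for pro-$p$ $N$) to finite presentability of $N$, and for larger $q$ one needs higher finiteness properties of type $\mathrm{FP}$. Without that finiteness, $E_2^{p,q}$ is the cohomology of $G$ with coefficients in a discrete torsion module that is merely a filtered colimit of finite modules; continuous cohomology commutes with such colimits but $\mathrm{H}^p_{\mathrm{disc}}(G,-)$ in general does not, so $\mathsf{E}_n(p)$ for $G$ cannot be applied directly. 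Serre's analogous exercise explicitly includes the hypothesis that the $\mathrm{H}^q(N,-)$ are finite, and I see no reason it can be dropped. You should either establish that $\mathsf{E}_n(p)$ for $N$ forces the needed finiteness (true for $q=2$ in the pro-$p$ case, by the discussion of non-finitely-presented groups in \S4.3, but unclear in general), add it as a hypothesis, or find an argument that genuinely avoids it; as written the proof does not close.
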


We discuss a natural application.  Recall that the \emph{rank} of a
profinite group $G$ is
\[
\mathrm{rk}(G) = \sup \{ d(H) \mid H \leq_\mathrm{c} G \},
\]
where $d(H)$ is the minimal number of topological generators of~$H$.
Using the classification of finite simple groups, it can be shown that
if $G$ is a profinite group all of whose Sylow subgroups have rank at
most $r$ then $\mathrm{rk}(G) \leq r+1$.  Furthermore, a pro-$p$ group
$G$ has finite rank if and only if it is $p$-adic analytic; in this
case the dimension of $G$ as a $p$-adic manifold is bounded by the
rank: $\dim(G) \leq \mathrm{rk}(G)$.

The following result from~\cite{YB_AJ_BK}
generalises~\cite[Theorem~2.10]{FKRS08}.

\begin{theorem} \label{thm:sol-good} Soluble profinite groups of
  finite rank are cohomologically good.
\end{theorem}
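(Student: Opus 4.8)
The plan is to build soluble profinite groups of finite rank up from simpler pieces and invoke Proposition~\ref{pro:est-closed} repeatedly. First I would observe that a soluble profinite group $G$ of finite rank $r$ is topologically finitely generated (indeed $d(G) \le r$), so all the machinery of Section~\ref{sec:NS-finite-profinite} applies to it and to its closed subgroups and continuous quotients. Next I would set up an induction on the derived length of $G$. The base case is that of abelian profinite groups of finite rank: such a group is a finite direct product of copies of $\mathbb{Z}_p$ (for various primes) and of finite cyclic groups, and one checks property $\mathsf{E}_n(p)$ directly for each factor and then assembles them using Proposition~\ref{pro:est-closed}. For $\mathbb{Z}_p$ this is essentially the statement that $\mathbb{Z}_p$ is cohomologically good, which is classical; for finite cyclic groups it is immediate since the abstract and continuous cohomology coincide.

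For the inductive step, given $G$ soluble of derived length $\ell \ge 2$ and finite rank, I would consider the abstract exact sequence
\[
1 \longrightarrow \overline{[G,G]} \longrightarrow G \longrightarrow G/\overline{[G,G]} \longrightarrow 1.
\]
The quotient $G/\overline{[G,G]}$ is an abelian profinite group of finite rank, hence cohomologically good by the base case. The subgroup $\overline{[G,G]}$ is a closed subgroup of $G$, hence again soluble profinite of finite rank, and of strictly smaller derived length, so by the induction hypothesis it is cohomologically good, in particular satisfies $\mathsf{E}_n(p)$ for every $n$ and $p$. Crucially, for Proposition~\ref{pro:est-closed} to apply we need this to be an \emph{abstract} short exact sequence with $N$ and $G/N$ finitely generated profinite --- which it is, since $\overline{[G,G]}$ is genuinely the closure and both it and the quotient have finite rank, hence are finitely generated. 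Applying Proposition~\ref{pro:est-closed} with $N = \overline{[G,G]}$ yields that $G$ satisfies $\mathsf{E}_n(p)$ for all $n$ and $p$, i.e.\ $G$ is cohomologically good, completing the induction.

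The main obstacle is making sure that Proposition~\ref{pro:est-closed} is genuinely applicable, i.e.\ that the relevant subgroups are closed so that the quotients are profinite and the extension is the right kind of abstract extension. This is exactly where Theorem~\ref{thm:Serre} (strong completeness) and Theorem~\ref{thm:main-profinite} enter: they guarantee, for finitely generated profinite groups, that the abstract commutator subgroup $[G,G]$ is already closed, so that $\overline{[G,G]} = [G,G]$ and there is no discrepancy between the abstract and the topological derived series. In fact one should be slightly careful and note that finite rank is inherited by closed subgroups by definition of $\mathrm{rk}$, and that solubility of bounded derived length is inherited by closed subgroups and continuous quotients, so the induction is well-founded; the remaining verifications --- the abelian base case and the bookkeeping of primes --- are routine. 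One may also wish to remark that this argument in fact shows the stronger statement that every closed subgroup and every continuous quotient of such a $G$ is again cohomologically good, which is what keeps the induction running.
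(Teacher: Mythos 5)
The paper itself does not give a proof of Theorem~\ref{thm:sol-good}; it is cited from the forthcoming work~\cite{YB_AJ_BK}, with Proposition~\ref{pro:est-closed} offered immediately beforehand as ``a useful tool'' for establishing $\mathsf{E}_n(p)$. Your overall strategy --- induct on derived length, feed the abstract (and here in fact topological) extension $1 \to [G,G] \to G \to G/[G,G] \to 1$ into Proposition~\ref{pro:est-closed}, noting that finite rank and solubility of smaller derived length pass to the closed normal subgroup $[G,G]$ and that $[G,G]$ is closed by the Nikolov--Segal theorem --- is exactly the ``natural application'' the paper is gesturing at, and the inductive step as you set it up is sound.

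The problem is the base case, and it is not a small one. Your claim that an abelian profinite group of finite rank is a \emph{finite} direct product of copies of various $\mathbb{Z}_p$ and finite cyclic groups is false: $\widehat{\mathbb{Z}} = \prod_p \mathbb{Z}_p$ has rank~$1$ but is a product over infinitely many primes, and the paper explicitly exhibits $\widehat{\mathbb{Z}} \ltimes \widehat{\mathbb{Z}}$ as a soluble profinite group of finite rank that is not virtually pronilpotent --- so this case must be handled. In general an abelian profinite group of finite rank $r$ is $A = \prod_p A_p$ with each $A_p \cong \mathbb{Z}_p^{\,a_p} \times F_p$ of rank at most $r$, but with possibly infinitely many primes contributing. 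To prove $\mathsf{E}_n(p)$ for such $A$ one has to show that the pro-$p'$ part $A_{p'} = \prod_{q \neq p} A_q$ is invisible to the \emph{abstract} cohomology with $p$-power coefficients, not just to the continuous cohomology; the key fact is that $A_{p'}$ is uniquely $p$-divisible as an abstract group, but making this into a vanishing statement for $\mathrm{H}^i_{\mathrm{disc}}(A_{p'},M)$ for $i \geq 1$ and then splicing via a spectral sequence or a further appeal to Proposition~\ref{pro:est-closed} is genuine work, not the ``routine'' bookkeeping you describe. Even the assertion that $\mathbb{Z}_p$ is cohomologically $p$-good, while true, rests on nontrivial computations (e.g.\ $p$-divisibility of $\Lambda^i_{\mathbb{Z}}\mathbb{Z}_p$ for $i \geq 2$ and the vanishing of $\mathrm{Ext}^1_{\mathbb{Z}}(\mathbb{Z}_p,\mathbb{F}_p)$), and labelling it ``classical'' glosses over where the real content of the theorem sits. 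The inductive machinery is fine; the abelian base case is the substance and needs to be argued, not asserted.
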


It is well-known that the second cohomology groups, and hence
properties $\mathrm{E}_2(p)$ and $\mathrm{E}_2$, are intimately linked
to the theory of group extensions.  Theorem~\ref{thm:sol-good} can be
used to prove that every abstract extension of soluble profinite
groups of finite rank carries again, in a unique way, the structure of
a soluble profinite group of finite rank.  We refer
to~\cite[Chapter~8]{JW98} for a general discussion of profinite groups
of finite rank.  An example of a soluble profinite group of finite
rank that is not virtually pronilpotent can be obtained as follows:
consider
$G = \overline{\langle x \rangle} \ltimes A \cong \widehat{\mathbb{Z}}
\ltimes \widehat{\mathbb{Z}}$,
where conjugation by $x$ on a Sylow pro-$p$ subgroup
$A_p \cong \mathbb{Z}_p$ of $A = \prod_p A_p$ is achieved via
multiplication by a primitive $(p-1)$th root or unity.

\bigskip

\noindent
\textbf{\S\thesection.3} Fix a prime $p$ and consider the questions
raised above for a finitely generated pro-$p$ group~$G$.  By means of
a reduction argument, one is led to focus on cohomology groups with
coefficients in the trivial module~$\mathbb{F}_p$, i.e., the groups
\[
\mathrm{H}^i_\mathrm{cont}(G) =
\mathrm{H}^i_\mathrm{cont}(G,\mathbb{F}_p) \quad \text{and} \quad
\mathrm{H}^i_\mathrm{disc}(G) =
\mathrm{H}^i_\mathrm{disc}(G,\mathbb{F}_p) \quad \text{for $i \in
  \mathbb{N}_0$}.
\]
Indeed, $G$ satisfies $\mathsf{E}_n(p)$ if and only if, for $1 \leq i
\leq n$, every cohomology $\alpha \in \mathrm{H}_\mathrm{disc}^i(U)$,
where $U \leq_\mathrm{o} G$, vanishes when restricted to a suitable
smaller open subgroup.  It is natural to start with the cohomology
groups in dimension~$2$, and by virtue of
Theorem~\ref{thm:Nik-Seg-coho}, we may regard
$\mathrm{H}^2_\mathrm{cont}(G)$ as a subgroup of
$\mathrm{H}^2_\mathrm{disc}(G)$.  It is well-known that these groups
are intimately linked with continuous, respectively abstract, central
extensions of a cyclic group $C_p$ by~$G$.

First suppose that the finitely generated pro-$p$ group $G$ is not
finitely presented as a pro-$p$ group.  For instance, the profinite
wreath product $C_p \,\hat{\wr}\, \mathbb{Z}_p = \varprojlim_i C_p \wr
C_{p^i}$ has this property.  Then it can be shown that there exists a
central extension $1 \rightarrow C_p \rightarrow H \rightarrow G
\rightarrow 1$ such that $H$ is not residually finite, hence cannot be
a profinite group; compare~\cite[Theorem~A]{FKRS08}.  It follows that
$\mathrm{H}^2_\mathrm{cont}(G)$ is strictly smaller than
$\mathrm{H}^2_\mathrm{disc}(G)$, and $G$ does not
satisfy~$\mathsf{E}_2(p)$.  This raises a natural question.
\begin{problem*}
  Characterise finitely presented pro-$p$ groups that satisfy
  $\mathsf{E}_2(p)$.
\end{problem*}

Groups of particular interest, for which the situation is not yet
fully understood, include on the one hand non-abelian free pro-$p$
groups and on the other hand $p$-adic analytic pro-$p$ groups.
Indeed, a rather intriguing theorem of Aldridge
K.~Bousfield~\cite[Theorem~11.1]{AB92} in algebraic topology shows
that, for a non-abelian finitely generated free pro-$p$ group~$F$, the
direct sum $\mathrm{H}^2_\mathrm{disc}(F) \oplus
\mathrm{H}^3_\mathrm{disc}(F)$ is infinite.  This stands in sharp
contrast to the well-known fact that $\mathrm{H}^2_\mathrm{cont}(F) =
\mathrm{H}^3_\mathrm{cont}(F) = 0$.

In~\cite{YB_AJ_BK} we establish the following result.

\begin{theorem} Let $F$ be a finitely presented pro-$p$ group
  satisfying~$\mathsf{E}_2(p)$.  Then every finitely presented
  continuous quotient of $F$ also satisfies~$\mathsf{E}_2(p)$.
\end{theorem}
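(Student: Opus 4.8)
The plan is to recast property~$\mathsf{E}_2(p)$ as a statement about abstract extensions, to lift such an extension along $F \to Q$ so that the hypothesis on~$F$ applies, and then to descend again. Write $Q = F/K$ with $K \trianglelefteq_\mathrm{c} F$. Since $Q$ is a finitely generated profinite group, Theorem~\ref{thm:Nik-Seg-coho} reduces the claim that $Q$ satisfies~$\mathsf{E}_2(p)$ to surjectivity of $\mathrm{H}^2_\mathrm{cont}(Q,M) \to \mathrm{H}^2_\mathrm{disc}(Q,M)$ for every finite $Q$-module~$M$ of $p$-power order; equivalently -- as $Q$ is strongly complete by Theorem~\ref{thm:Serre} -- to the assertion that every abstract extension $1 \to M \to E \to Q \to 1$ inducing the prescribed action on~$M$ is residually finite as an abstract group. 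Indeed, if such an $E$ is residually finite then $M$ embeds into a finite quotient of~$E$, hence $M \hookrightarrow \widehat{E}$ with $\widehat{E}/M \cong \widehat{Q} = Q$, so the canonical map $E \to \widehat{E}$ is an isomorphism and the extension is continuous. A second use of finite presentation enters here: since $F$ and $Q$ are both finitely presented pro-$p$ groups, the five-term exact sequence in continuous $\mathbb{F}_p$-cohomology of $1 \to K \to F \to Q \to 1$ shows that $\mathrm{H}^1_\mathrm{cont}(K,\mathbb{F}_p)^Q$ is finite, whence (a continuous Nakayama argument) $K$ is topologically generated, as a normal subgroup of~$F$, by a suitable finite set $\{z_1,\dots,z_s\}$.

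Next I would fix an abstract extension $1 \to M \to E \to Q \to 1$ as above and form the fibre product $E' = E \times_Q F = \{ (e,f) \mid \text{$e$ and $f$ have the same image in $Q$} \}$ over the quotient map $F \to Q$. The second projection gives an abstract extension $1 \to M \to E' \to F \to 1$ whose class inflates $[E]$; since $M$ is finite (a finitely generated profinite group trivially satisfying $\mathsf{E}_2(p)$) and $F$ satisfies $\mathsf{E}_2(p)$, this class lies in $\mathrm{H}^2_\mathrm{cont}(F,M)$, so $E'$ is a finitely generated profinite group with $M$ a closed subgroup, and in fact Proposition~\ref{pro:est-closed} shows that $E'$ itself satisfies $\mathsf{E}_2(p)$. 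The first projection is a surjection $\pi_1 \colon E' \to E$ with $\widetilde{K} := \ker\pi_1 = \{ (1,k) \mid k \in K \}$; this $\widetilde{K}$ is normal in~$E'$, is carried isomorphically onto~$K$ by the second projection, and meets the finite normal subgroup $\ker(E' \to F) \cong M$ trivially. Thus $E \cong E'/\widetilde{K}$, and since $E'$ is strongly complete (Theorem~\ref{thm:Serre}) the abstract group~$E$ is residually finite if and only if $\widetilde{K}$ is closed in~$E'$.

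The crux is therefore to prove $\widetilde{K} = \overline{\widetilde{K}}$. The closure $\overline{\widetilde{K}}$ is a closed normal subgroup of~$E'$ containing $\widetilde{K}$ with finite index, and one checks that $\overline{\widetilde{K}} = \pi_1^{-1}(M_0)$ for a $Q$-submodule $M_0 \subseteq M$; consequently $E'/\overline{\widetilde{K}} \cong E/M_0$ is a profinite, hence residually finite, quotient of~$E$, and everything comes down to showing $M_0 = 0$. This is where the finite set $\{z_1,\dots,z_s\}$ is used: lifting the $z_i$ into~$\widetilde{K}$ and exploiting that they topologically normally generate~$K$, together with the closedness of $[\overline{\widetilde{K}},E']$ supplied by Theorem~\ref{thm:main-profinite} applied to $\overline{\widetilde{K}} \trianglelefteq_\mathrm{c} E'$, one argues that $\overline{\widetilde{K}}$ can be no larger than the closed normal closure of the $z_i$ in~$E'$, which forces $M_0 = 0$. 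I expect this last point -- ruling out that the discrepancy between the abstract and continuous pictures persists along the non-closed subgroup~$\widetilde{K}$ -- to be the main obstacle; the remaining ingredients (the five-term sequence, the inflation identity for fibre products, and the elementary bookkeeping around~$E'$) are routine.
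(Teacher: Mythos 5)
The paper states this theorem and cites the forthcoming article~\cite{YB_AJ_BK} without giving a proof, so there is no argument of the paper's own to compare against; your proposal has to stand on its own. The reductions you carry out -- reformulating $\mathsf{E}_2(p)$ (given Theorem~\ref{thm:Nik-Seg-coho}) as residual finiteness of abstract extensions $1 \to M \to E \to Q \to 1$ of a continuous finite $p$-primary $Q$-module, forming the fibre product $E' = E \times_Q F$, transporting a profinite structure to $E'$ via $\mathsf{E}_2(p)$ for $F$, identifying $\widetilde K = \ker\pi_1$, and observing that $\overline{\widetilde K} = M_0 \times K$ for a $Q$-submodule $M_0 \leq M$ so that everything reduces to $M_0 = 0$ -- are all sound, and this is the natural skeleton for such a proof.

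The genuine gap is in the last step. The closed normal closure $N$ of the lifts $\tilde z_i = (1, z_i)$ in $E'$ is automatically contained in $\overline{\widetilde K}$ (since $\overline{\widetilde K}$ is a closed normal subgroup containing the $\tilde z_i$), so the inequality you invoke, ``$\overline{\widetilde K}$ is no larger than $N$'', amounts to $N = \overline{\widetilde K}$ -- and that by itself does not force $M_0 = 0$. What you actually need is $N \subseteq \widetilde K$, but $\widetilde K$ contains only the \emph{abstract} normal closure of the $\tilde z_i$, and there is no reason for its topological closure $N$ to remain inside $\widetilde K$; that containment is essentially the statement you are trying to prove. Moreover, the closedness of $[\overline{\widetilde K}, E']$ supplied by Theorem~\ref{thm:main-profinite} does not obviously bridge this gap: one computes $[\overline{\widetilde K}, E'] = [M_0, Q] \times [K, F]$, which is contained in $\widetilde K = \{1\} \times K$ only when the $Q$-action on $M$ is trivial; for a general continuous $Q$-module $M$ one has $[M_0, Q] \neq 0$ as soon as $M_0$ is nonzero and not centralised by $Q$, so the inclusion fails precisely in the cases that matter. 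One needs a sharper input here -- for instance, a bounded-width statement expressing $K$ as a bounded product of $F$-conjugates of $z_i^{\pm 1}$ would let you conclude $\widetilde K$ is compact and hence closed, but such a width bound is false in general (already $p\mathbb{Z}_p \trianglelefteq \mathbb{Z}_p$ is not a bounded product of conjugates of $p$), so that route is also closed. As written, the argument for $M_0 = 0$ is not a proof.
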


In particular, this shows that, if every finitely generated free
pro-$p$ group $F$ has $\mathrm{H}^2_\mathrm{disc}(F) =
\mathrm{H}^2_\mathrm{cont}(F) = 0$, then all finitely presented pro-$p$
groups satisfy~$\mathsf{E}_2(p)$.

We close by stating a theorem from~\cite{YB_AJ_BK} regarding the
cohomology of compact $p$-adic analytic groups that improves results
of Balasubramanian Sury~\cite{BS93} on central extensions of Chevalley
groups over non-archimedean local fields of characteristic~$0$.

\begin{theorem}
  Let $\Phi$ be an irreducible root system, and let $K$ be a
  non-archimedean local field of characteristic~$0$ and residue
  characteristic~$p$.  

  Then every open compact subgroup $H$ of the Chevalley group
  $\mathrm{Ch}_\Phi(K)$ satisfies~$E_2(p)$.  Consequently, every
  central extension of $C_p$ by $H$ is residually finite and carries,
  in a unique way, the structure of a profinite group.
\end{theorem}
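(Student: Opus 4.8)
The plan is to strip the topology off $H$ in stages, reduce the assertion to a comparison of abstract and continuous second cohomology for a single well-understood pro-$p$ group, and then settle that comparison using the Chevalley structure together with Theorem~\ref{thm:sol-good}. Since $H$ is an open compact subgroup of a $p$-adic analytic group, it is itself compact $p$-adic analytic, hence virtually pro-$p$: it contains an open normal pro-$p$ subgroup $P$ with $H/P$ finite. A finite group trivially satisfies $\mathsf{E}_n(p)$, so Proposition~\ref{pro:est-closed} applied to $1 \to P \to H \to H/P \to 1$ reduces us to proving $\mathsf{E}_2(p)$ for $P$; shrinking $P$, we may take it to be a deep principal congruence subgroup of a parahoric, in particular a uniform pro-$p$ group. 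Since $\mathsf{E}_2(p)$ can be checked on a cofinal family of open subgroups, and invoking the reduction of \S\ref{sec:coho}.3 to trivial coefficients~$\mathbb{F}_p$, the whole theorem comes down to the following: for every uniform open subgroup $V \leq_\mathrm{o} P$ the natural map $\mathrm{H}^2_\mathrm{cont}(V,\mathbb{F}_p) \to \mathrm{H}^2_\mathrm{disc}(V,\mathbb{F}_p)$ is bijective.

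By Theorem~\ref{thm:Nik-Seg-coho} this map is already injective (and the degree-$1$ map already bijective), so only surjectivity in degree~$2$ is at stake; unwinding the definition, this says exactly that every abstract central extension $1 \to C_p \to \widetilde V \to V \to 1$ is residually finite, so that $\widetilde V$, being finitely generated and residually finite, is a profinite group whose profinite structure is unique by strong completeness (Theorem~\ref{thm:Serre}) --- this is precisely the ``consequently'' in the statement. On the cohomological side, $V$ uniform makes $\mathrm{H}^\bullet_\mathrm{cont}(V,\mathbb{F}_p)$ completely explicit by Lazard's theory: it is finite-dimensional, in fact the exterior algebra on $\mathrm{H}^1_\mathrm{cont}(V,\mathbb{F}_p)$. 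The problem is therefore to bound $\dim_{\mathbb{F}_p} \mathrm{H}^2_\mathrm{disc}(V,\mathbb{F}_p)$ above by this known value --- equivalently, to show that the abstract group $V$ admits no ``extra'' central extensions beyond the continuous ones.

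For this I would exploit the Chevalley structure. The group $P$, and each of its open subgroups, is generated by finitely many affine root subgroups, which over the valuation ring $\mathcal{O}$ of $K$ are isomorphic to $(\mathfrak{m}^k,+) \cong \mathbb{Z}_p^{[K:\mathbb{Q}_p]}$; these are abelian --- hence soluble --- profinite groups of finite rank, and root subgroups attached to a nilpotent set of roots generate a nilpotent subgroup of finite rank. By Theorem~\ref{thm:sol-good} all of these building blocks are cohomologically good, so in particular satisfy $\mathsf{E}_2(p)$. The Steinberg (Chevalley commutator and Weyl) relations, read through the Bruhat--Tits building, present $P$ as a colimit of these good subgroups amalgamated over good subgroups, and a Mayer--Vietoris / spectral-sequence argument propagates property $\mathsf{E}_2(p)$ from the pieces to $P$, the only possible global obstruction lying in a subquotient controlled by $K_2(K)$ and by the Schur multipliers of the finite groups of Lie type that occur as parahoric quotients. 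For $K$ of characteristic zero with residue characteristic~$p$ these groups are finite and --- using the classification of finite simple groups --- of the expected bounded size, so the obstruction already appears in $\mathrm{H}^2_\mathrm{cont}$. Reversing the chain of reductions then gives $\mathsf{E}_2(p)$ for $H$, and the central-extension statement follows as above; this is the point at which the argument sharpens Sury's computations in~\cite{BS93}, see~\cite{YB_AJ_BK}.

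The main obstacle is the last step. Everything up to the Chevalley input is bookkeeping with the machinery already assembled in \S\ref{sec:coho}, but controlling the abstract $\mathrm{H}^2$ of $P$ --- showing that assembling the cohomologically good root subgroups along the Steinberg relations creates no new abstract central extensions, and in particular making the Mayer--Vietoris comparison between the abstract amalgam and its pro-$p$ completion precise --- is delicate, and requires identifying the global obstruction exactly and verifying that it is already continuous. This is also where characteristic zero is indispensable: in equal characteristic the analogous congruence subgroups have infinite rank, fail to be finitely presented as pro-$p$ groups, and do not satisfy $\mathsf{E}_2(p)$, so no such uniform comparison can hold.
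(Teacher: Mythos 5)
The paper does not prove this theorem: it is stated verbatim as an announcement of a result from the forthcoming joint paper \cite{YB_AJ_BK}, and no argument whatsoever appears in this survey. There is therefore no proof in the paper against which your attempt can be compared, and your reconstruction is necessarily speculative.

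That said, your chain of reductions is sensible and uses the tools the paper actually sets up. Passing from $H$ to an open normal uniform pro-$p$ subgroup via Proposition~\ref{pro:est-closed} (with the finite quotient $H/P$ trivially satisfying $\mathsf{E}_2(p)$) is clean; the reduction to trivial $\mathbb{F}_p$-coefficients and to the surjectivity of $\mathrm{H}^2_\mathrm{cont}(V,\mathbb{F}_p) \to \mathrm{H}^2_\mathrm{disc}(V,\mathbb{F}_p)$ on a cofinal family of uniform open $V$ is correct (the injectivity in degree $2$ and the bijectivity in degree $1$ do come from Theorem~\ref{thm:Nik-Seg-coho}, and surjectivity in degree $2$ is indeed equivalent to residual finiteness of all abstract central extensions by $C_p$, via Theorem~\ref{thm:Serre}); and the identification of $\mathrm{H}^\bullet_\mathrm{cont}(V,\mathbb{F}_p)$ with the exterior algebra on $\mathrm{H}^1_\mathrm{cont}$ for $V$ uniform is Lazard's theorem. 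The parenthetical ``consequently'' part of the statement you also unwind correctly.

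The core step, however, is not a proof and in its present form would not become one. You propose to compute, or at least bound, $\mathrm{H}^2_\mathrm{disc}(V,\mathbb{F}_p)$ by a Mayer--Vietoris or amalgam spectral sequence over the Bruhat--Tits building, using the Steinberg relations and Theorem~\ref{thm:sol-good} for the root subgroups. The difficulty is that the decomposition of $P$ (or $V$) into parahoric/root pieces is a decomposition in the category of topological groups: the abstract group underlying a compact open subgroup is \emph{not} the abstract colimit of its root subgroups amalgamated along the Steinberg relations, and abstract group cohomology $\mathrm{H}^\bullet_\mathrm{disc}$ of a profinite group is precisely the invariant that fails to see such topological presentations. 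This is the whole thrust of the Bousfield example cited in the paper: for a free pro-$p$ group, the continuous presentation is as simple as possible and yet $\mathrm{H}^2_\mathrm{disc} \oplus \mathrm{H}^3_\mathrm{disc}$ is infinite. Your appeal to $K_2(K)$ is likewise about central extensions of the full group $\mathrm{Ch}_\Phi(K)$, or of its $S$-arithmetic subgroups, as \emph{abstract} groups with a genuine generators-and-relations presentation, not of a compact open subgroup; and the Schur multipliers of the finite quotients only control $\mathrm{H}^2_\mathrm{cont}$ via the direct limit, which is precisely the side you already understand. So the paragraph that was supposed to do the work is, as you yourself acknowledge, an identification of where the difficulty lies rather than a resolution of it. The missing idea --- whatever mechanism \cite{YB_AJ_BK} uses to rule out non-residually-finite abstract central extensions of these specific uniform groups, plausibly exploiting $p$-adic analyticity and finite presentability more directly than a building decomposition --- is not supplied.
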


\smallskip

\begin{acknowledgements*}
  I am grateful to the referee for valuable suggestions that led, for instance, to the
  inclusion of Corollary~\ref{cor:N=G-pro-p}.
\end{acknowledgements*}


\end{document}